\newtheorem{theorem}{Theorem}[section]
\newtheorem{lemma}[theorem]{Lemma}
\newtheorem{corollary}[theorem]{Corollary} 
\newtheorem{proposition}[theorem]{Proposition}
\theoremstyle{definition}
\newtheorem{definition}[theorem]{{Definition}}
\newtheorem{example}[theorem]{Example}
\newtheorem{remark}[theorem]{Remark}
\newtheorem{notation}[theorem]{Notation}
\newtheorem{chunk}[theorem]{\hspace*{-.77ex}\bf}
\newtheorem{fact}[theorem]{\bf Fact}
\newtheorem{para}[theorem]{}
\newtheorem{Question}[theorem]{Question}
\newtheorem{Problem}[theorem]{Problem}
\newtheorem*{chunk*}{}
\numberwithin{equation}{theorem}
\newif\ifdviwin
\def\p{{\mathfrak p}}
\def\q{{\mathfrak q}}
\def\m{{\mathfrak m}}
\def\H{\operatorname{H}}
\def\Hom{\operatorname{Hom}}
\def\Coker{\operatorname{Coker}}
\newcommand{\Spec}{\operatorname{Spec}}
\def\dim{\operatorname{dim}}
\def\ann{\operatorname{ann}}
\def\Min{\operatorname{Min}}
\def\Coker{\operatorname{Coker}}
\def\p{\mathfrak{p}}
\def\sub{\text{-}}
\newcommand{\ol}{\overline}
\newcommand{\Ht}{\operatorname{ht}}
\newcommand{\fp}{\mathfrak p}
\newcommand{\fq}{\mathfrak q}
\newcommand{\fs}{\mathfrak s}
\newcommand{\fr}{\mathfrak r}
\newcommand{\into}{\hookrightarrow}
\newcommand{\mspec}{m \sub \Spec}
\newcommand{\summands}{\zeta}
\newcommand{\components}{\beta}
\newcommand{\disjointunion}{\bigsqcup}
\begin{document}
\bibliographystyle{amsplain}

\subjclass[2010]{Primary 
05C25, 
05C78, 
13B22; 
Secondary 
13D45, 
13J10} 

\keywords{$S_2$-ifications, graph labeling, monomial ideals, connected components, canonical modules}

\title
{On the structure of $S_2$-ifications of complete local rings}

\author[S.\ Sather-Wagstaff]{Sean Sather-Wagstaff}
\address{Sean Sather-Wagstaff, Department of Mathematics,
300 Minard Hall,
North Dakota State University,
Fargo, North Dakota 58105-5075, USA}
\email{Sean.Sather-Wagstaff@ndsu.edu}
\urladdr{http://www.ndsu.edu/pubweb/\~{}ssatherw/}

\thanks{
Sean Sather-Wagstaff was supported in part by a grant from the NSA.
Sandra Spiroff was supported in part by a grant from the Simons Foundation}

\author{Sandra Spiroff}
\address{Sandra Spiroff, Department of Mathematics, 
305 Hume Hall,
P.~O.~Box 1848,
University of Mississippi, University, MS  38677-1848, USA}
\email{spiroff@olemiss.edu}
\urladdr{http://home.olemiss.edu/\~{ }spiroff/}

\begin{abstract}  Motivated by work of  Hochster and Huneke, we investigate 
several constructions related to the $S_2$-ification $T$ of a complete equidimensional local ring $R$:
the canonical module, the top local cohomology module, topological spaces of the form $\Spec(R)-V(J)$,
and the (finite simple) graph $\Gamma_R$ with vertex set $\Min(R)$ defined by Hochster and Huneke.
We generalize one of their results by showing, e.g., that the number of maximal ideals of $T$ is equal to the
number of connected components of $\Gamma_R$. We further investigate this graph by exhibiting
a technique for showing that a given graph $G$ can be realized as one of the form $\Gamma_R$.
\end{abstract}

\date \today
\maketitle




\section{Introduction}
\label{sec140103a}

\begin{chunk}
Throughout this paper, the term ``ring'' is short for ``noetherian ring'', and ``graph'' is short for ``finite simple undirected graph''.
In addition, $\mathsf k$ will be a field, and $(R,\m,\mathsf k)$ a local ring.
\end{chunk}

This project takes its motivation from a paper by M.~Hochster and C.~Huneke \cite{HH}, regarding $S_2$-ifications of complete, equidimensional, local rings, where by ``equidimensional" we mean that  $\dim (R/\fp) = \dim (R)$ for every minimal prime $\fp$ of $R$.  (See Section~\ref{sec140103f} for $S_2$-ification definitions and background material.)
Our interest in this subject comes from our paper~\cite{SWS2} where we use~\cite[(3.9)]{HH}
to show that a certain integral closure has to be local. 
The utility of this construction has led us to investigate its properties more carefully. 
In this paper, we focus on the following construction and subsequent result.

\begin{definition} \cite[(3.4)]{HH} \label{graph} Assume that $R$ is equidimensional.  We denote by $\Gamma_R$ the graph whose vertices are the minimal primes of $R$, and whose edges are determined by the following rule: if $\fp, \fq$ are distinct minimal primes of $R$, then $\fp$ and $\fq$ are adjacent in $\Gamma_R$ if and only if $\Ht_R(\fp+\fq)=1$.
\end{definition}

\begin{fact} \cite[(3.6)]{HH} \label{HH} If $R$ is complete and equidimensional, then the following conditions are equivalent:
\begin{enumerate}[(i)]
\item[(a)] The local cohomology module $H^{\dim (R)}_{\mathfrak m}(R)$ is indecomposable;
\item[(b)] The canonical module of $R$ is indecomposable;
\item[(c)] The $S_2$-ification of $R$ is local;
\item[(d)] For every ideal $J$ of height at least two, $\Spec(R) -V(J)$ is connected;
\item[(e)] The graph $\Gamma_R$ from Definition \ref{graph} is connected.
\end{enumerate}
\end{fact}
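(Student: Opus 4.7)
The plan is to establish $(a) \Leftrightarrow (b) \Leftrightarrow (c)$ by algebra, $(d) \Leftrightarrow (e)$ by topology, and $(c) \Leftrightarrow (e)$ by combining the structure of the $S_2$-ification with Hartshorne's connectedness theorem.

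For $(a) \Leftrightarrow (b)$: since $R$ is complete, local duality gives $\H^{\dim R}_\m(R) \cong \Hom_R(\omega_R, E(\sfk))$, and Matlis duality is a contravariant equivalence between noetherian and artinian $R$-modules that preserves indecomposability. For $(b) \Leftrightarrow (c)$: identifying the $S_2$-ification $T$ with $\End_R(\omega_R)$, completeness of $R$ and finiteness of $R \to T$ yield $T = \prod_\fM T_\fM$ with each $T_\fM$ complete local $S_2$. Then $\omega_R \cong \omega_T = \bigoplus_\fM \omega_{T_\fM}$ is a decomposition into indecomposables indexed by $\mspec T$ (each summand is indecomposable since $\End_{T_\fM}(\omega_{T_\fM}) = T_\fM$ is local), so $\omega_R$ is indecomposable iff $T$ is local.

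For $(d) \Leftrightarrow (e)$: given a disconnection $\Min(R) = A \sqcup B$ with no edges across, set $I_A = \bigcap_{\fp \in A} \fp$, $I_B = \bigcap_{\fp \in B} \fp$, and $J = I_A + I_B$; then $\Ht J \geq 2$ and $\Spec(R) - V(J) = (V(I_A) - V(J)) \sqcup (V(I_B) - V(J))$ is disconnected. Conversely, if $\Spec(R) - V(J)$ is disconnected for some $J$ with $\Ht J \geq 2$, then each $V(\fp) - V(J)$ with $\fp \in \Min(R)$ is nonempty and irreducible, placing $\fp$ into a unique component; and whenever $\fp, \fq \in \Min(R)$ are adjacent in $\Gamma_R$, a height-one prime $\fr \supset \fp + \fq$ lies outside $V(J)$ and in $V(\fp) \cap V(\fq)$, forcing $\fp$ and $\fq$ into the same component.

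For $(c) \Leftrightarrow (e)$: the bijection $T_\fp \cong R_\fp$ on primes of height $\leq 1$ identifies $\Gamma_T$ with $\Gamma_R$, and $T = \prod_\fM T_\fM$ decomposes $\Gamma_T$ as $\bigsqcup_\fM \Gamma_{T_\fM}$ (since minimal primes in distinct factors share no containing prime of $T$). Each $T_\fM$ is complete local equidimensional $S_2$, so Hartshorne's connectedness theorem gives $\Spec(T_\fM) - V(I)$ connected for every $I$ of height $\geq 2$, whence $\Gamma_{T_\fM}$ is connected by $(d) \Rightarrow (e)$ applied to $T_\fM$. Thus $\Gamma_R$ has exactly $|\mspec T|$ connected components, and $\Gamma_R$ is connected iff $T$ is local. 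The main obstacle is this last step: verifying $\Gamma_T = \Gamma_R$ (via the height-preserving bijection of low-codimension primes of the $S_2$-ification) and confirming that each $T_\fM$ is complete local equidimensional $S_2$ so that Hartshorne's theorem applies in the form needed; the remaining implications reduce to routine bookkeeping with Matlis duality and the $\End_R(\omega_R)$-structure of $T$.
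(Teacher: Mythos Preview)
The paper does not prove this statement: it is quoted as a Fact from \cite[(3.6)]{HH} and then used as a black box in Section~\ref{sec140103c} (each of Propositions~\ref{c=>d}, \ref{prop140106b}, and~\ref{ht2} explicitly invokes it to dispose of the case where the relevant quantity equals~$1$ before establishing the general count). So there is no proof in the paper to compare against.

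That said, your argument is essentially correct and reconstructs the Hochster--Huneke proof: Matlis duality for $(a)\Leftrightarrow(b)$; the identification $T\cong\End_R(\omega)$ together with the product decomposition $T=\prod_\fM T_\fM$ and $\omega\cong\bigoplus_\fM\omega_{T_\fM}$ for $(b)\Leftrightarrow(c)$; partitioning $\Min(R)$ and building $J=I_A+I_B$ for $(d)\Leftrightarrow(e)$; and the height-preserving bijection on primes of height $\leq 1$ combined with Hartshorne's connectedness theorem for $S_2$ rings for $(c)\Leftrightarrow(e)$. These are precisely the ingredients the present paper reuses in Propositions~\ref{prop140106a}--\ref{ht2} to prove the generalization Theorem~\ref{thm140103a}, so your approach aligns with both the source and this paper's methods. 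One point worth making explicit: in your $(c)\Leftrightarrow(e)$ step you apply the already-proved $(d)\Rightarrow(e)$ to each factor $T_\fM$, which requires $T_\fM$ to be equidimensional (so that $\Gamma_{T_\fM}$ is defined); this holds because $T$ is module-finite over the complete equidimensional ring $R/j(R)$, hence each $T_\fM$ is complete local equidimensional, as the paper also notes via \cite[(2.2)~k)]{HH}.
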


The first main result of the current paper is a generalization of this fact, which requires a bit of notation/discussion.

\begin{notation}\label{notn140107a}
Assume  that $R$ is   complete.
The Krull-Remak-Schmidt Theorem states that a finitely generated $R$-module decomposes uniquely
as a direct sum of indecomposable $R$-modules. 
By Matlis duality, the same is true for artinian $R$-modules. 
For an $R$-module $M$ that is either finitely generated or artinian, let 
$\summands_R(M)$ denote the number of summands in a direct sum decomposition of $M$ by indecomposable $R$-modules.
For a topological space or graph $X$, let
$\components(X)$ denote the number of connected components of $X$.
For a ring $S$, let $\mspec(S)$ denote the set of its maximal ideals. 
\end{notation}

Here is our generalization of Fact~\ref{HH}.
Its proof  is spread throughout  Section~\ref{sec140103c}; 
see~\ref{proof140106a}. 

\begin{theorem} \label{thm140103a} If $R$ is complete and equidimensional, then the following quantities are equal:
\begin{enumerate}[\ \rm(a)]
\item $\summands_R(\H^{\dim (R)}_{\mathfrak m}(R))$;
\item $\summands_R(\omega)$ where $\omega$ is a canonical module for $R$;
\item $|\mspec(T)|$ where $T$ is
the $S_2$-ification of $R$;
\item $\max\{\components(\Spec(R) -V(J))\mid \text{$J$ is an ideal of $R$ such that $\Ht_R(J)\geq 2$}\}$;
\item $\components(\Gamma_R)$.
\end{enumerate}
\end{theorem}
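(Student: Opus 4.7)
The plan is to deduce each equality from the local case provided by Fact~\ref{HH}, by decomposing the $S_2$-ification $T$ into its complete local factors and tracking how those factors interact with $\omega$, with $\H^{\dim R}_{\m}(R)$, and with the graph $\Gamma_R$. Since $R$ is complete, Matlis duality $(-)^{\vee} = \Hom_R(-,E_R(\sfk))$ restricts to a contravariant equivalence between artinian and finitely generated $R$-modules sending $\H^{\dim R}_\m(R)$ to $\omega$; it preserves indecomposable direct summands, and the Krull-Remak-Schmidt theorem holds on both sides, which immediately yields (a) $=$ (b).

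For (b) $=$ (c): being module-finite over the complete ring $R$, the ring $T$ is complete and semilocal, so it decomposes as $T \cong \prod_{\n \in \mspec(T)} T_{\n}$ with each $T_{\n}$ complete, local, equidimensional, and already $S_2$. Because the canonical module is preserved by passage to the $S_2$-ification, $\omega \cong \omega_T \cong \bigoplus_{\n} \omega_{T_\n}$ as $R$-modules, and since each $T_\n$ is its own $S_2$-ification, Fact~\ref{HH}~(c)$\Rightarrow$(b) forces every $\omega_{T_\n}$ to be indecomposable, yielding $\summands_R(\omega) = |\mspec(T)|$.

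For (c) $=$ (e): the finite extension $R \hookrightarrow T$ preserves total rings of fractions, so contraction induces a bijection $\Min(T) \to \Min(R)$. Assigning to each $\p \in \Min(R)$ the unique $\n \in \mspec(T)$ lying over its lift partitions $\Min(R)$ into pieces $C_1,\ldots,C_s$ with $s = |\mspec(T)|$. Applying Fact~\ref{HH}~(c)$\Leftrightarrow$(e) to each factor $T_{\n_i}$, and using that heights of sums of minimal primes agree under the finite extension $R \to T$, shows that the induced subgraph of $\Gamma_R$ on $C_i$ is connected; conversely, an edge between $\p \in C_i$ and $\q \in C_j$ with $i \neq j$ would lift a height-$1$ prime above $\p+\q$ to a prime of $T$ simultaneously below $\n_i$ and $\n_j$, which is impossible. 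Thus the $C_i$ are exactly the connected components of $\Gamma_R$; verifying this height comparison and the identification $\Gamma_{T_{\n_i}} \simeq \Gamma_R|_{C_i}$ is the main technical point of the whole argument.

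For (d) $=$ (e): for any $J$ with $\Ht_R J \geq 2$, the set $\Spec(R)-V(J)$ contains every prime of height $\leq 1$, so two minimal primes adjacent in $\Gamma_R$ are linked by a height-$1$ prime inside $\Spec(R)-V(J)$; since every component of $\Spec(R)-V(J)$ contains a minimal prime, this forces $\components(\Spec(R)-V(J)) \leq \components(\Gamma_R)$. For the reverse inequality, set $\fa_i = \bigcap_{\p \in C_i}\p$ and $J = \bigcap_{i \neq j}(\fa_i+\fa_j)$: any minimal prime of $\fa_i+\fa_j$ contains some $\p+\q$ with $\p \in C_i$ and $\q \in C_j$, which has height $\geq 2$ by the definition of $\Gamma_R$, so $\Ht_R J \geq 2$; and any chain in $\Spec(R)-V(J)$ joining minimal primes from distinct $C_i$'s would require an intermediate prime containing minimal primes from two different components, but every such prime lies in $V(J)$. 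Hence equality is achieved for this particular $J$, completing the proof.
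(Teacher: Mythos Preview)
Your argument is correct and largely tracks the paper's structure: Matlis duality for (a)$=$(b); the product decomposition $T\cong\prod_{\n} T_{\n}$ together with $\omega\cong\bigoplus_{\n}\omega_{T_{\n}}$ for (b)$=$(c); and the bijection $\Min(T)\leftrightarrow\Min(R)$ plus Fact~\ref{HH} on each local factor for (c)$=$(e). The one genuinely different move is your upper bound for~(d). The paper (Proposition~\ref{c=>d}) bounds $\components(\Spec(R)-V(J))$ above by $|\mspec(T)|$ via the continuous surjection $\Spec(T)-V(JT)\to\Spec(R)-V(J)$, after applying Fact~\ref{HH} to each $T_i$ to count components upstairs. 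You instead bound it directly by $\components(\Gamma_R)$, using only that every prime of height $\leq 1$ lies outside $V(J)$ and that every point of $\Spec(R)-V(J)$ lies in the closure of some minimal prime; this is more elementary and avoids $T$ entirely for that step. The construction of the extremal ideal $J=\bigcap_{i\neq j}(\fa_i+\fa_j)$ matches the paper's Proposition~\ref{ht2}.

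Two small points to tighten when writing this up. First, the identification $\omega\cong\omega_T\cong\bigoplus\omega_{T_{\n}}$ you invoke for (b)$=$(c) is stated in \cite[(2.2)~k)]{HH} under the hypothesis that $R$ is unmixed; the paper handles the general case by first passing to $R/j(R)$, and you should do the same. Second, Fact~\ref{HH} tells you each $\omega_{T_{\n}}$ is indecomposable as a $T_{\n}$-module, whereas you need indecomposability over $R$; this follows because $\End_R(\omega)\cong T$ forces $\Hom_R(\omega_{T_{\n}},\omega_{T_{\n'}})=0$ for $\n\neq\n'$ and $\End_R(\omega_{T_{\n}})\cong T_{\n}$ local, but it deserves a sentence. (The paper sidesteps this by proving the two inequalities $s\geq t$ and $s\leq t$ separately rather than asserting that the $\omega_{T_{\n}}$ are the indecomposable $R$-summands.)
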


In the process of proving this result, we developed a certain interest in understanding more about the graph $\Gamma_R$.
This is the subject of Sections~\ref{sec140103b} and~\ref{sec140103e}.
In the first of these sections, 
we work to familiarize the reader with this construction via explicit computations and preliminary results.

In Section~\ref{sec140103e}, we investigate the following question: given a graph $G$, does there exist a 
complete local equidimensional ring $R$ such that $\Gamma_R$ is graph-isomorphic to $G$? In particular, we describe a 
labeling procedure for graphs, called an admissible labeling, that gives a large class of graphs where the answer is affirmative.
(See Definition~\ref{labels}.)

\begin{theorem} \label{thm140103b}
Let $G$ be a graph. If $G$ admits an admissible labeling, then there is 
a complete local equidimensional ring $R$ such that $\Gamma_R$ is graph-isomorphic to $G$.
Moreover, the ring $R$ is of the form $\mathsf k[\![X_1,\ldots,X_n]\!]/I$ where 
$I$ is generated by square-free monomials in the variables $X_1,\ldots,X_n$.
\end{theorem}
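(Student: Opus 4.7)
The plan is to build $R$ directly from the combinatorial data of an admissible labeling, in the form of a squarefree monomial quotient $\sfk[\![X_1,\ldots,X_n]\!]/I$. This class of rings is particularly well-adapted to the problem: the minimal primes of a squarefree monomial ideal are themselves monomial, and both the heights and the dimensions of the primes appearing in the analysis of $\Gamma_R$ can be read off from the sizes of unions and intersections of the indexing subsets of variables. In this way all of the algebraic conditions required by Theorem~\ref{thm140103b} translate into purely combinatorial conditions on those subsets.

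Specifically, I expect the admissible labeling of Definition~\ref{labels} to furnish, for each vertex $v_i$ of $G$, a subset $S_i \subseteq \{1,\ldots,n\}$ of some fixed common cardinality $d$, such that $v_i$ and $v_j$ are adjacent in $G$ if and only if $|S_i \cap S_j| = d-1$. Given such data I set $\fp_i := (X_s : s \in S_i) \subseteq S := \sfk[\![X_1,\ldots,X_n]\!]$, let $I := \bigcap_i \fp_i$, and put $R := S/I$. Then $R$ is complete and local since $S$ is and $I$ lies inside the maximal ideal, and it has the form asserted in the theorem by construction. Because no two distinct $d$-subsets of $\{1,\ldots,n\}$ can contain one another, the decomposition $I = \bigcap_i \fp_i$ is irredundant, so the minimal primes of $R$ are precisely $\fp_1/I,\ldots,\fp_m/I$, where $m = |V(G)|$. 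And since $\dim(S/\fp_i) = n - d$ for every $i$, the ring $R$ is equidimensional of dimension $n-d$.

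It remains to show that the bijection $v_i \leftrightarrow \fp_i/I$ carries edges of $G$ to edges of $\Gamma_R$. Because $R$ is equidimensional and catenary (as a quotient of the regular local ring $S$), the formula $\Ht_R(\fq) = \dim R - \dim R/\fq$ holds for every prime $\fq$ of $R$. Applied to $\fq = (\fp_i + \fp_j)/I$, together with the identification $\fp_i + \fp_j = (X_s : s \in S_i \cup S_j)$, this yields
\[
\Ht_R\bigl((\fp_i + \fp_j)/I\bigr) \;=\; (n-d) - (n - |S_i \cup S_j|) \;=\; |S_i \cup S_j| - d,
\]
which equals $1$ exactly when $|S_i \cap S_j| = d-1$, i.e., exactly when $v_i$ and $v_j$ are adjacent in $G$. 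The real obstacle is therefore not in this algebraic verification but in the combinatorics: the content of the theorem lives in Definition~\ref{labels}, which must be designed so that the set-theoretic ``differ in exactly one element'' condition faithfully encodes adjacency in $G$, and so that enough graphs admit such labelings for the theorem to have broad scope.
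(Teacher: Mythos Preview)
Your proposal is correct and follows essentially the same construction as the paper: take the admissible labeling $\phi\colon V\hookrightarrow\binom{[n]}{s}$, set $I=\bigcap_{v\in V}(X_i:i\in\phi(v))$ in $\sfk[\![X_1,\ldots,X_n]\!]$, and let $R$ be the quotient. The paper's write-up is terser, appealing directly to Fact~\ref{fact140103a} for the minimal-prime and height statements, whereas you spell out the irredundancy of the primary decomposition and the height computation via catenarity and equidimensionality; but the underlying argument is the same.
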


The proof of this result is contained in~\ref{proof140103a}.
We also show that certain standard classes of graphs (e.g., complete graphs, cycles, and paths) do have admissible labelings,
and we exhibit a graph on 5 vertices that does not admit an admissible labeling.

\section{Background}
\label{sec140103f}

\subsection*{Canonical modules and $S_2$-ifications}

\begin{definition} 
Let $E$ be the injective hull of $\mathsf k$ over $R$.
A \textit{canonical module} for $R$ is a finitely generated $R$-module $\omega$ such that
$\Hom_R(\omega,E)$ is isomorphic to the local cohomology module $\operatorname{H}^{\dim(R)}_{\m}(R)$.
\end{definition}

\begin{fact}
If $R$ is a homomorphic image of a Gorenstein local ring, e.g., $R$ is complete, then it has a canonical module.
\end{fact}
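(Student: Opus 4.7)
The plan is to build the canonical module as an $\operatorname{Ext}$ over the Gorenstein ring, verify its defining property via local duality and Hom--tensor adjunction, and reduce the ``$R$ complete'' case to the first clause by Cohen's structure theorem.

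To begin, write $R \cong S/I$ with $(S,\mathfrak n)$ Gorenstein local, and set $n=\dim(S)$ and $d=\dim(R)$. The candidate is $\omega := \operatorname{Ext}^{n-d}_S(R,S)$, a finitely generated $S$-module annihilated by $I$, and hence a finitely generated $R$-module. The goal is to exhibit an isomorphism $\Hom_R(\omega,E) \cong \H^{\dim(R)}_{\fm}(R)$, where $E = E_R(\sfk)$.

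I would chain three standard isomorphisms. First, local duality for the Gorenstein ring $S$ gives
\[ \Hom_S(\omega, E_S(\sfk)) \;\cong\; \H^{d}_{\mathfrak n}(R); \]
although this is usually stated for complete rings, the non-complete case reduces to it because both sides are compatible with the faithfully flat extension $S \to \hat S$ and $E_S(\sfk)=E_{\hat S}(\sfk)$. Second, since $\fm$ and $\mathfrak n R$ have the same radical, $\H^{d}_{\mathfrak n}(R) \cong \H^{d}_{\fm}(R)$. Third, Hom--tensor adjunction together with the identification $\Hom_S(R, E_S(\sfk)) \cong E_R(\sfk) = E$ yields
\[ \Hom_S(\omega, E_S(\sfk)) \;\cong\; \Hom_R\bigl(\omega, \Hom_S(R, E_S(\sfk))\bigr) \;\cong\; \Hom_R(\omega, E). \]
Combining these three isomorphisms produces the required identification.

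For the ``$R$ complete'' case, Cohen's structure theorem writes any complete local ring as a quotient of a regular local ring (a power series ring over a field or over a complete discrete valuation ring), which is Gorenstein; so the first clause applies. I do not anticipate a serious obstacle: each ingredient (local duality over Gorenstein rings, its compatibility with completion, change of support for local cohomology, Hom--tensor adjunction, and Cohen's structure theorem) is standard and can be cited, e.g., from Bruns--Herzog. The only care required is to keep track of the $R$- versus $S$-module structures in the adjunction step.
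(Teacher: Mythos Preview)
Your argument is correct and is precisely the standard textbook proof (as in Bruns--Herzog, Theorem~3.3.7 and Corollary~3.5.8): construct $\omega=\operatorname{Ext}^{n-d}_S(R,S)$, invoke local duality over the Gorenstein ring $S$, and pass from $E_S(\sfk)$ to $E_R(\sfk)$ via adjunction. One small remark: the form of local duality you need, $\H^i_{\mathfrak n}(M)\cong\Hom_S(\operatorname{Ext}^{n-i}_S(M,S),E_S(\sfk))$ for $M$ finitely generated over a Gorenstein local ring $S$, already holds without any completeness hypothesis on $S$, so you do not actually need the detour through $\widehat S$ that you sketched.

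As for comparison with the paper: there is nothing to compare. The paper states this as a background \emph{Fact} without proof, treating it as standard literature; it offers no argument of its own. Your write-up simply supplies the proof the paper elected to omit.
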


\begin{definition} \label{j-ideal} 
Denote by $j(R)$ the largest ideal which is a submodule of $R$ of dimension smaller than $\dim (R)$.  Specifically, 
$$j(R) = \{a \in R \mid \dim(R/ \ann_R(a)) < \dim (R) \}.$$
\end{definition}

\begin{fact}  \label{fact1}  
The (local) ring $R$ is equidimensional and unmixed (i.e., has no embedded associated primes) if and only if $j(R)= (0)$; see~\cite[(2.1)]{HH}.
In particular, if $R$ is a local domain, then $j(R)=0$.
\end{fact}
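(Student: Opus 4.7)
The plan is to pass through a minimal primary decomposition of the zero ideal, $(0) = Q_1 \cap \cdots \cap Q_n$, with each $Q_i$ being $\fp_i$-primary and $\Ass(R) = \{\fp_1,\ldots,\fp_n\}$. The crucial preliminary identity I would establish is
\[
\dim(R/\ann_R(a)) \;=\; \max\{\dim(R/\fp_i) : a \notin Q_i\},
\]
valid for every nonzero $a \in R$. This follows from rewriting $\ann_R(a) = \bigcap_{a \notin Q_i}(Q_i:a)$ and invoking the standard fact that $(Q_i:a)$ is $\fp_i$-primary whenever $a \notin Q_i$, so that $V(\ann_R(a))$ is the union of the irreducible closed sets $V(\fp_i)$ taken over those $i$ with $a \notin Q_i$.

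For the forward direction, I would assume $R$ is equidimensional and unmixed, so every $\fp_i$ is a minimal prime with $\dim(R/\fp_i) = \dim R$. The displayed formula then yields $\dim(R/\ann_R(a)) = \dim R$ for every nonzero $a$, whence $j(R) = (0)$. For the converse, I would assume $j(R) = (0)$ and fix $\fp \in \Ass(R)$, writing $\fp = \ann_R(a)$ for some nonzero $a$. Since $a \notin j(R)$, this gives $\dim(R/\fp) = \dim R$. This forces both equidimensionality (because every minimal prime is associated) and unmixedness (because a strict containment $\fq \subsetneq \fp$ of primes yields $\dim(R/\fq) \geq \dim(R/\fp) + 1 > \dim R$, which is impossible). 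The ``in particular'' statement for a domain $R$ then follows at once, or directly from the observation that $\ann_R(a) = (0)$ for every nonzero $a$.

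The main technical obstacle I anticipate is the primary-decomposition bookkeeping behind the opening identity, specifically verifying that the relevant maximum is actually attained among the associated primes of $R$ themselves rather than at some smaller prime in $V(\ann_R(a))$; presenting $V(\ann_R(a))$ explicitly as the union of the closed sets $V(\fp_i)$ for $a \notin Q_i$ circumvents exactly this issue. Once that identity is in hand, both implications collapse to elementary manipulations with chains of primes and with the representation of associated primes as annihilators of elements.
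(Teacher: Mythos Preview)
Your argument is correct. The primary-decomposition identity $\ann_R(a)=\bigcap_{a\notin Q_i}(Q_i:a)$ with each $(Q_i:a)$ still $\fp_i$-primary is standard, and from it the formula $\dim(R/\ann_R(a))=\max\{\dim(R/\fp_i):a\notin Q_i\}$ follows cleanly. Both implications then go through exactly as you outline; the unmixedness step uses only that in a local ring a strict inclusion $\fq\subsetneq\fp$ forces $\dim(R/\fq)\geq\dim(R/\fp)+1$, which is immediate by extending chains to $\fm$.

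The paper, however, does not supply a proof at all: the statement is recorded as a Fact with a bare citation to~\cite[(2.1)]{HH}. So there is no ``paper's approach'' to compare against beyond the reference. Your write-up therefore furnishes a self-contained justification where the paper offers none, and is a reasonable reconstruction of what the cited argument presumably contains.
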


\begin{definition}  \label{S2def} \cite[(2.3)]{HH}  
\begin{enumerate}[(a)]
\item
If $j(R) = 0$, then an $R$-subalgebra $T$ of the total  ring of quotients of $R$ is an {\it $S_2$-ification} of $R$ if:
\begin{enumerate}
\item[$\bullet$] $T$ is module finite over $R$;
\item[$\bullet$] $T$ satisfies the Serre condition $(S_2)$ over $R$; and
\item[$\bullet$] $\Coker(R \to T)$ has no prime ideal of $R$ of height less than two in its support.  
\end{enumerate}
\item
When $R$ is equidimensional  but possibly $j(R) \neq 0$, then by an {\it $S_2$-ification} of $R$, we mean 
 an $S_2$-ification of $R/j(R).$
\end{enumerate}
\end{definition}

\begin{fact}  \label{fact2} \cite[(2.7)]{HH}  If $R$ has a canonical module $\omega$, then $R$ has an $S_2$-ification.
Specifically, $\Hom_R(\omega, \omega)$ is an $S_2$-ification of $R$.  
\end{fact}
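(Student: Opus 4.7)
The plan is to construct $R$ explicitly as a quotient of a power series ring by a squarefree monomial ideal, reading the minimal primes off directly from the labeling data of $G$. Based on the intended correspondence with $\Gamma_R$, I expect an admissible labeling to assign to each vertex $v$ of $G$ a subset $S_v\subseteq\{1,\ldots,n\}$ of some common cardinality $h$, with the combinatorial property that $|S_v\setminus S_w|=1$ when $v,w$ are adjacent in $G$ and $|S_v\setminus S_w|\ge 2$ when $v\neq w$ are not adjacent. Given such data, set $\mathfrak{P}_v:=(X_i:i\in S_v)$ in $\sfk[\![X_1,\ldots,X_n]\!]$, define $I:=\bigcap_v\mathfrak{P}_v$ and $R:=\sfk[\![X_1,\ldots,X_n]\!]/I$, and write $\fp_v$ for the image of $\mathfrak{P}_v$ in $R$. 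The monomial nature of $I$ will already give the ``Moreover'' clause of the theorem for free.

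First I would verify the basic ring-theoretic properties of $R$. It is automatically complete local as a quotient of a complete local ring by a proper ideal. Since each $\mathfrak{P}_v$ is prime, the displayed intersection is a primary decomposition of $I$; because the $S_v$ all have size $h$ and are pairwise distinct, no $\mathfrak{P}_v$ contains any other, so the $\fp_v$ are precisely the minimal primes of $R$. Since $\dim R/\fp_v=n-h$ is constant in $v$, $R$ is equidimensional of dimension $n-h$, and the assignment $v\mapsto\fp_v$ yields a bijection between the vertex set of $G$ and $\Min(R)$.

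Next I would verify the edge correspondence via a height calculation. For distinct $v,w$, the sum $\fp_v+\fp_w$ lifts to $\mathfrak{P}_v+\mathfrak{P}_w=(X_i:i\in S_v\cup S_w)$ in the regular ring $\sfk[\![X_1,\ldots,X_n]\!]$, which has height $|S_v\cup S_w|$. Combining catenarity of the power series ring with equidimensionality of $R$ gives
\[\Ht_R(\fp_v+\fp_w)=\dim R-\dim R/(\fp_v+\fp_w)=(n-h)-(n-|S_v\cup S_w|)=|S_v\setminus S_w|.\]
By admissibility this height equals $1$ precisely when $v,w$ are adjacent in $G$, so the bijection $v\mapsto\fp_v$ extends to a graph isomorphism $\Gamma_R\xrightarrow{\sim}G$.

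The main obstacle is not in the ring-theoretic verifications, which follow the standard template for Stanley--Reisner-type constructions, but rather in aligning the formal definition of admissibility with the two distinct requirements it must simultaneously enforce: $|S_v\setminus S_w|=1$ on every edge of $G$ and $|S_v\setminus S_w|\ge 2$ on every non-edge. Coordinating both of these through a single global choice of subsets of $\{1,\ldots,n\}$ is precisely what admissibility must encode, and the existence of graphs admitting no admissible labeling (such as the five-vertex example the authors mention) shows this is a genuine combinatorial constraint rather than a formality. Once the definition of admissible labeling is formally in hand, however, the ring-theoretic translation above should be routine.
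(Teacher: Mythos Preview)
Your proposal does not address the statement in question at all. Fact~\ref{fact2} asserts that if $R$ has a canonical module $\omega$, then $\Hom_R(\omega,\omega)$ is an $S_2$-ification of $R$; this is a cited result from Hochster--Huneke and the paper offers no proof of it. Your argument, by contrast, is a proof of Theorem~\ref{thm140103b}: given a graph $G$ with an admissible labeling, construct a complete local equidimensional ring $R$ with $\Gamma_R\cong G$. Nothing in your write-up touches canonical modules, the endomorphism ring $\Hom_R(\omega,\omega)$, the $(S_2)$ condition, or the defining properties of an $S_2$-ification from Definition~\ref{S2def}.

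For what it is worth, as a proof of Theorem~\ref{thm140103b} your argument is essentially the paper's own (see~\ref{proof140103a}): intersect the prime ideals $\mathfrak{P}_v=(X_i:i\in\phi(v))$ in the power series ring, and read off equidimensionality and the edge condition from Fact~\ref{fact140103a}. But it is simply aimed at the wrong target here.
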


\subsection*{Graphs}

We only use basic facts from graph theory; see, e.g., the text of Diestel~\cite{D}.

\begin{notation}
Let $n$ be a positive integer. 
The complete graph on $n$ vertices (i.e., the $n$-clique) is denoted $K_n$.
The path on $n$ vertices is denoted $P_{n-1}$. 
The cycle on $n \geq 3$ vertices is denoted $C_n$.
\end{notation}

\begin{definition}
A graph in which all the vertices have the same degree is \emph{regular}.
\end{definition}

\begin{definition}
Let $G$ be a connected graph with vertex set $V$.
A \emph{spanning tree} of $G$ is a tree $T$ that is a subgraph of $G$ with vertex set $V$.
\end{definition}

\begin{remark}
It is straightforward to show that every connected graph has a spanning tree.
\end{remark}


\section{An Introduction to $\Gamma_R$}
\label{sec140103b}


To get a feel for the graph $\Gamma_R$, this section consists of
explicit computations and preliminary results, some of which will be useful in Section~\ref{sec140103e}.
We begin with some small examples.

\begin{example} \label{dim1} If $|\Min(R)|=1$ (e.g., if $R$ is a domain) or $\dim(R)\leq 1$, then 
$\Gamma_R = K_{|\Min(R)|}$, so 
the equivalent conditions in Fact \ref{HH} are satisfied.  
Indeed, if $|\Min(R)|=1$, then $\Gamma_R$ is an isolated vertex.
If $\dim (R) \leq 1$, then for all $\mathfrak p, \mathfrak q \in \Min(R)$, we have $\Ht_R(\mathfrak p + \mathfrak q) \leq 1$.  
In each case, the desired conclusion follows immediately by definition.
\end{example}

\begin{example}  \label{ex140103b}
There are two graphs on two vertices, namely,  the path $P_1$ and the disjoint union of two  vertices.  
The two possibilities are realized as $\Gamma_R$
by the rings $R=\mathsf k[\![X_1,X_2]\!]/(X_1X_2)$ and $R=\mathsf k[\![X_1,X_2,X_3,X_4]\!]/(X_1X_2, X_2X_3, X_3X_4, X_1X_4)$, respectively, whose graphs are shown below.

\begin{align*}
\begin{pgfpicture}{3cm}{-.5cm}{6cm}{0cm}
\pgfnodecircle{Node1}[fill]{\pgfxy(0,0)}{0.1cm}
\pgfnodecircle{Node2}[fill]{\pgfxy(2,0)}{0.1cm}
\pgfnodecircle{Node3}[fill]{\pgfxy(7, 0)}{0.1cm}
\pgfnodecircle{Node4}[fill]{\pgfxy(9, 0)}{0.1cm}
\pgfnodeconnline{Node1}{Node2}
\pgfputat{\pgfxy(-.65, -.3)}{\pgfbox[left,center]{$(x_1)$}}
\pgfputat{\pgfxy(2.1,-.3)}{\pgfbox[left,center]{$(x_2)$}}
\pgfputat{\pgfxy(5.8,-.3)}{\pgfbox[left,center]{$(x_1, x_3)$}}
\pgfputat{\pgfxy(9.1, -.3)}{\pgfbox[left,center]{$(x_2,x_4)$}}
\end{pgfpicture}
\end{align*}
\end{example}

\begin{example} \label{min=3} 
There are four graphs on three vertices, and each is realized as $\Gamma_R$ as pictured below.
Indeed, the first two (i.e., the connected ones) are associated to the rings $\mathsf k[\![X_1,X_2, X_3]\!]/(X_1X_2X_3)$ and $\mathsf k[\![X_1,X_2, X_3,X_4]\!]/(X_1X_2, X_2X_3, X_3X_4)$.  The two disconnected graphs arise from $\mathsf k[\![X_1, \dots, X_5]\!]/I$ and $\mathsf k[\![X_1, \dots, X_6]\!]/J$, where $I = (X_1, X_2) \bigcap  (X_3, X_4) \bigcap (X_3, X_5)$ and $J = (X_1, X_2) \bigcap (X_3, X_4) \bigcap (X_5, X_6)$.

\begin{align*}
\begin{pgfpicture}{7cm}{-.5cm}{6cm}{1cm}
\pgfnodecircle{Node1}[fill]{\pgfxy(0,0)}{0.1cm}
\pgfnodecircle{Node2}[fill]{\pgfxy(.8666,1)}{0.1cm}
\pgfnodecircle{Node3}[fill]{\pgfxy(1.732, 0)}{0.1cm}
\pgfnodeconnline{Node1}{Node2}
\pgfnodeconnline{Node1}{Node3}
\pgfnodeconnline{Node2}{Node3}
\pgfputat{\pgfxy(-.5, -.3)}{\pgfbox[left,center]{$(x_2)$}}
\pgfputat{\pgfxy(.6,1.4)}{\pgfbox[left,center]{$(x_1)$}}
\pgfputat{\pgfxy(1.7,-.3)}{\pgfbox[left,center]{$(x_3)$}}
\pgfnodecircle{Node4}[fill]{\pgfxy(4,0)}{0.1cm}
\pgfnodecircle{Node5}[fill]{\pgfxy(4.8666,1)}{0.1cm}
\pgfnodecircle{Node6}[fill]{\pgfxy(5.732, 0)}{0.1cm}
\pgfnodeconnline{Node4}{Node5}
\pgfnodeconnline{Node4}{Node6}
\pgfputat{\pgfxy(3.35, -.3)}{\pgfbox[left,center]{$(x_2,x_3)$}}
\pgfputat{\pgfxy(4.35,1.4)}{\pgfbox[left,center]{$(x_1,x_3)$}}
\pgfputat{\pgfxy(5.25,-.3)}{\pgfbox[left,center]{$(x_2,x_4)$}}
\pgfnodecircle{Node7}[fill]{\pgfxy(8,0)}{0.1cm}
\pgfnodecircle{Node8}[fill]{\pgfxy(8.8666,1)}{0.1cm}
\pgfnodecircle{Node9}[fill]{\pgfxy(9.732, 0)}{0.1cm}
\pgfnodeconnline{Node7}{Node9}
\pgfputat{\pgfxy(7.35, -.3)}{\pgfbox[left,center]{$(x_3,x_4)$}}
\pgfputat{\pgfxy(8.3,1.4)}{\pgfbox[left,center]{$(x_1,x_2)$}}
\pgfputat{\pgfxy(9.3,-.3)}{\pgfbox[left,center]{$(x_3,x_5)$}}
\pgfnodecircle{Node10}[fill]{\pgfxy(12,0)}{0.1cm}
\pgfnodecircle{Node11}[fill]{\pgfxy(12.8666,1)}{0.1cm}
\pgfnodecircle{Node12}[fill]{\pgfxy(13.732, 0)}{0.1cm}
\pgfputat{\pgfxy(11.4, -.3)}{\pgfbox[left,center]{$(x_3,x_4)$}}
\pgfputat{\pgfxy(12.3,1.4)}{\pgfbox[left,center]{$(x_1,x_2)$}}
\pgfputat{\pgfxy(13.3,-.3)}{\pgfbox[left,center]{$(x_5,x_6)$}}
\end{pgfpicture}
\end{align*}
(These graphs are easily verified using Fact~\ref{fact140103a} below.)
\end{example}

Because many of our examples come from monomial ideals, we present some well known properties about them next.

\begin{fact}\label{fact140103a}
Let $R=\mathsf k[\![X_1,\ldots,X_n]\!]/I$ where $I$ is generated by monomials in the variables $X_1,\ldots,X_n$.
Then the minimal primes of $R$ are generated by sublists of the variables, that is,
they are of the form $(X_{i_1},\ldots,X_{i_m})R$ where $1\leq i_1<\cdots<i_m\leq n$.
The ring $R$ is equidimensional precisely when each minimal prime is generated by the same number of variables.
Assume that $R$ is equidimensional, and consider two minimal primes $\p=(X_{i_1},\ldots,X_{i_m})R$
and $\q=(X_{j_1},\ldots,X_{j_m})R$. Then $\Ht_R(\p+\q)=0$ (i.e., $\fp=\mathfrak q$) if and only if $\{i_1,\ldots,i_m\}=\{j_1,\ldots,j_m\}$.
And $\Ht_R(\p+\q)=1$ if and only if the sets $\{i_1,\ldots,i_m\}$ and $\{j_1,\ldots,j_m\}$ differ by exactly one element, that is, if and only if
$|\{i_1,\ldots,i_m\}\bigcap\{j_1,\ldots,j_m\}|=m-1$,
that is, if and only if $|\{i_1,\ldots,i_m\}\bigcup\{j_1,\ldots,j_m\}|=m+1$.
\end{fact}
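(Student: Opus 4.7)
The plan is to work inside the regular local ring $S := \mathsf k[\![X_1,\ldots,X_n]\!]$ of dimension $n$ and pass to $R = S/I$ via the order-preserving bijection between primes of $R$ and primes of $S$ containing $I$. Since $S$ is regular it is universally catenary, so $R$ is catenary, and when $R$ is additionally equidimensional, I may use the height/dimension formula $\Ht_R(P) + \dim(R/P) = \dim(R)$ for every prime $P$ of $R$.

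For the shape of the minimal primes, I would use the standard monomial-ideal argument: if $P \supseteq I$ is a prime of $S$, then for every monomial generator $X_{i_1}\cdots X_{i_k}$ of $I$ at least one $X_{i_j}$ lies in $P$. Choosing one such variable per generator yields a prime $Q \subseteq P$ of the form $(X_{\ell_1},\ldots,X_{\ell_r})S$ with $Q \supseteq I$; taking $P$ minimal over $I$ forces $Q = P$, which proves the first assertion. For the equidimensionality criterion, note that $(X_{i_1},\ldots,X_{i_m})S$ is part of a regular system of parameters in $S$, so its height in $S$ is exactly $m$, whence $\dim(R/\p) = n - m$ for $\p = (X_{i_1},\ldots,X_{i_m})R$. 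Thus $R$ is equidimensional if and only if every minimal prime of $R$ uses the same number of variables.

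For the two height statements, set $A = \{i_1,\ldots,i_m\}$ and $B = \{j_1,\ldots,j_m\}$, so that $\p + \q = (X_k \mid k \in A \cup B)R$ and $\dim(R/(\p+\q)) = n - |A \cup B|$. Since $\dim(R) = n - m$, the catenary formula above yields $\Ht_R(\p+\q) = |A \cup B| - m$. This vanishes precisely when $A = B$ (both sets have cardinality $m$), giving the first equivalence; and it equals $1$ precisely when $|A \cup B| = m + 1$, which by inclusion-exclusion is equivalent to $|A \cap B| = m - 1$, that is, to $A$ and $B$ differing in exactly one element, giving the second.

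The main obstacle is conceptual bookkeeping rather than technique: one must keep track of which ring (the regular $S$ or the quotient $R$) each height is computed in, and invoke catenarity together with equidimensionality at the right moment to convert a codimension computation in $S$ into a height in $R$. Once this accounting is in place, the entire fact reduces to elementary combinatorics of finite subsets of $\{1,\ldots,n\}$.
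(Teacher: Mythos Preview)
Your argument is correct. The paper itself does not prove this statement: it is presented as a \texttt{fact} environment with the preamble ``we present some well known properties about them next,'' and no proof is given. Your write-up supplies exactly the standard justification one would expect---reducing to the regular ambient ring $S=\mathsf k[\![X_1,\ldots,X_n]\!]$, identifying minimal primes via the variable-selection argument, and then using that $R$ is catenary (as a quotient of a regular local ring) and equidimensional to convert the codimension $|A\cup B|$ in $S$ into the height $|A\cup B|-m$ in $R$. One small point worth making explicit in your exposition: the dimension formula $\Ht_R(J)+\dim(R/J)=\dim(R)$ that you invoke for the non-prime ideal $J=\p+\q$ follows from the prime case by taking the minimum over primes containing $J$, which you are implicitly doing; a reader might appreciate seeing that step spelled out.
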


In the remainder of the section, we show how to construct a ring $R$ such that $\Gamma_R$ takes a familiar form, for example, an arbitrary cycle.
In particular, there are rings $R$ whose graphs have arbitrarily large diameter and girth.  Likewise, there are rings $R$ whose graphs are complete or regular.  

\begin{example} \label{cycle} For any integer $n \geq 3$, the graph $\Gamma_R$ of the ring $R = \mathsf k[\![X_1, \dots, X_n]\!]/J$, where 
$$J = (X_1, X_2) \bigcap  (X_2, X_3) \bigcap \dots \bigcap (X_{n-1}, X_n)  \bigcap (X_n, X_1)$$
is the cycle $C_n$, which has girth $n$ and diameter $\lfloor \frac{n}{2} \rfloor$.  
\end{example}


\begin{proposition}  \label{complete} 
If $R$ is a complete hypersurface with $|\Min(R)|=n$, then $\Gamma_R=K_n$.
\end{proposition}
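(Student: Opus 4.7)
The plan is to exploit the unique factorization structure available in any complete regular local ring. Writing $R = S/(f)$ for a complete regular local ring $(S,\fn)$ and a nonzero element $f \in \fn$, the Auslander--Buchsbaum theorem tells us $S$ is a UFD. Factor $f = u \cdot f_1^{a_1} \cdots f_m^{a_m}$ with $u$ a unit and the $f_i$ pairwise non-associate primes in $S$. Then the minimal primes of $(f)$ in $S$ are exactly $(f_1),\ldots,(f_m)$, each of height $1$, so the minimal primes of $R$ are $\fp_i = (f_i)/(f)$ for $i=1,\ldots,m$. In particular, $m = |\Min(R)| = n$, and $R$ is equidimensional of dimension $\dim(S)-1$ (so $\Gamma_R$ is actually defined).

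Next I would compute $\Ht_R(\fp_i + \fp_j)$ for distinct $i,j$. Since $f_i$ and $f_j$ are non-associate primes in the UFD $S$, the element $f_j$ is a nonzerodivisor on $S/(f_i)$, and so in the Cohen--Macaulay ring $S$ we have $\Ht_S(f_i,f_j) = 2$, hence $\dim(S/(f_i,f_j)) = \dim(S) - 2$. Now $R/(\fp_i+\fp_j) \cong S/(f_i,f_j)$, and because $R$ is Cohen--Macaulay (being a hypersurface in $S$), we can read off
\[
\Ht_R(\fp_i+\fp_j) \;=\; \dim(R) - \dim\bigl(R/(\fp_i+\fp_j)\bigr) \;=\; (\dim S - 1) - (\dim S - 2) \;=\; 1.
\]
Thus every two distinct minimal primes are adjacent in $\Gamma_R$, which gives $\Gamma_R = K_n$.

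The only delicate step is the height computation for $\fp_i+\fp_j$; it is tempting to invoke Krull's height theorem directly in $R$, but this only yields $\Ht_R(\fp_i+\fp_j) \leq 2$, which is not sharp enough. The cleanest route is the one above: lift the computation to $S$, where unique factorization makes the height of $(f_i,f_j)$ transparent, and then descend using the Cohen--Macaulay dimension formula. This is the main obstacle, and once it is handled the conclusion is immediate from Definition~\ref{graph}.
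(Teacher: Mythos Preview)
Your proof is correct and follows essentially the same route as the paper: write $R$ as a quotient of a complete regular local ring by a single nonunit, factor that element into prime powers using unique factorization, and identify the minimal primes of $R$ with the principal primes $(f_i)$. The paper's own proof simply asserts that $(f_i)R$ and $(f_j)R$ are adjacent whenever $i\neq j$ without verifying the height-one condition; your Cohen--Macaulay dimension computation lifted to $S$ supplies exactly that missing justification.
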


\begin{proof} 
By assumption, we have 
$R \cong Q/(f_1^{e_1}f_2^{e_2} \cdots f_n^{e_n})$ where $f_1, f_2, \dots, f_n$ are non-zero non-associate primes in a complete regular local ring $Q$  and each $e_i \geq 1$.  
Then each $(f_i)R$ represents a vertex in $\Gamma_R$, and $(f_i)R$ is adjacent to $(f_j)R$ if and only if $i \neq j$.
\end{proof}

\begin{example}\label{ex140103a}
Let $n$ be a positive integer. Then the ring $R=\mathsf k[\![X_1, \dots, X_n]\!]/(X_1\cdots X_n)$ satisfies the hypotheses of
Proposition~\ref{complete}, so $\Gamma_R=K_n$.
\end{example}

\begin{proposition}  If $R$ is a complete monomial complete intersection, then $\Gamma_R$ is  regular and connected.
\end{proposition}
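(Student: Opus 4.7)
The approach is to reduce to a combinatorial situation using the structure theorem for monomial regular sequences, and then apply Fact~\ref{fact140103a} directly.

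First, I would write $R = Q/I$ where $Q = \mathsf k[\![X_1,\ldots,X_n]\!]$ and $I = (m_1,\ldots,m_c)$ is generated by a regular sequence of monomials. I would then invoke (or briefly justify) the standard fact that monomials form a regular sequence in a polynomial or power series ring if and only if they have pairwise disjoint sets of variables (this follows, e.g., from considering the Koszul complex, or directly from the characterization of associated primes of monomial ideals). Let $S_i \subseteq \{X_1,\ldots,X_n\}$ denote the set of variables appearing in $m_i$, with $k_i = |S_i|$, so the $S_i$ are pairwise disjoint.

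Next, I would describe the minimal primes of $R$. Since $I = \bigcap_{(X_{j_1},\ldots,X_{j_c})} (X_{j_1},\ldots,X_{j_c})$ where the intersection runs over all choices $X_{j_i} \in S_i$, the minimal primes of $R$ are exactly the ideals of the form $\p_{(j_1,\ldots,j_c)} = (X_{j_1},\ldots,X_{j_c})R$ with $X_{j_i} \in S_i$. Because the $S_i$ are disjoint, each such prime is generated by $c$ distinct variables, so $R$ is equidimensional (a fact also guaranteed by the complete intersection hypothesis). This gives a natural identification of the vertex set of $\Gamma_R$ with the Cartesian product $S_1 \times \cdots \times S_c$.

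Now I would apply Fact~\ref{fact140103a}: two minimal primes $\p_{(j_1,\ldots,j_c)}$ and $\p_{(l_1,\ldots,l_c)}$ are adjacent precisely when their generating variable sets differ by exactly one element, which (using disjointness of the $S_i$) happens if and only if the tuples $(j_1,\ldots,j_c)$ and $(l_1,\ldots,l_c)$ differ in exactly one coordinate. From this the two conclusions are immediate: the degree of any vertex is $\sum_{i=1}^c (k_i - 1)$, which is independent of the choice of vertex, so $\Gamma_R$ is regular; and any tuple can be transformed into any other by changing one coordinate at a time, yielding a path in $\Gamma_R$, so $\Gamma_R$ is connected. (Structurally, $\Gamma_R$ is the Cartesian graph product $K_{k_1}\,\square\,\cdots\,\square\,K_{k_c}$.)

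The only step that requires any care is the reduction to pairwise disjoint supports, i.e., the claim that a regular sequence of monomials must have disjoint variable sets. Everything after that is a routine application of Fact~\ref{fact140103a} and elementary combinatorics on tuples.
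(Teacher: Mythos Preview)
Your proposal is correct and follows essentially the same approach as the paper: identify the minimal primes with tuples in $S_1\times\cdots\times S_c$, use Fact~\ref{fact140103a} to see that adjacency means differing in exactly one coordinate, and read off regularity and connectedness from this. You are in fact slightly more explicit than the paper in justifying why the monomials have pairwise disjoint supports, and your identification of $\Gamma_R$ as the Cartesian product $K_{k_1}\,\square\,\cdots\,\square\,K_{k_c}$ is a nice structural observation the paper does not make.
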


\begin{proof} By assumption, we have
$$R = \frac{\mathsf k[\![X_{11}, \dots, X_{1i_1}, X_{21}, \dots, X_{2i_2}, \dots, X_{m1}, \dots, X_{mi_m}, Y_1, \dots, Y_r]\!]}{(X_{11}^{e_{11}} \cdots X_{1i_1}^{e_{1i_1}}, X_{21}^{e_{21}} \cdots X_{2i_2}^{e_{2i_2}}, \dots, X_{m1}^{e_{m1}} \cdots X_{mi_m}^{e_{mi_m}})},$$
where each $e_{jk} \geq 1$, each $i_k \geq 1$, and $r \geq 0$.  
The minimal primes of $R$ are  the ideals of the form $\mathfrak p = (X_{1j_1}, X_{2j_2}, \dots, X_{mj_m})R$, so $|\Min(R)| = i_1i_2 \cdots i_m$.  If $\mathfrak q =  (X_{1k_1}, X_{2k_2}, \dots, X_{mk_m})R$ is another minimal prime of $R$, then $\mathfrak p$ and $\mathfrak q$ are adjacent in $\Gamma_R$ if and only if there exists an
integer $d$ between 1 and $m$ such that 
$|j_t - k_t| =  0$ if $t \neq d$, and $|j_d - k_d| \neq 0$.
Thus, the degree of $\mathfrak p$ in $\Gamma_R$ is $(i_1-1) + (i_2 -1) + \cdots + (i_m - 1)= i_1 + i_2 + \cdots + i_m - m.$  Hence, $\Gamma_R$ is connected and regular.  
\end{proof}


\section{Connected Components, Maximal Ideals, and Indecomposable Summands}
\label{sec140103c}


\begin{chunk} \label{blanket assumptions}
Throughout this section, we assume that $R$, in addition to being local, is complete and equidimensional with $n:=\dim (R)$.
Let $T$ be the $S_2$-ification of $R$ and $\omega$ the canonical module of $R$.  
\end{chunk}

This section is devoted to the proof of Theorem~\ref{thm140103a}, mostly presented in the following propositions, and completed in~\ref{proof140106a}.
Recall that the symbols $\summands$ and $\components$ are from Notation~\ref{notn140107a}.

\begin{proposition} \label{prop140106a}
Under the assumptions in \ref{blanket assumptions}, 
we have $\summands_R(\H^n_{\mathfrak m}(R))=\summands_R(\omega)$.
\end{proposition}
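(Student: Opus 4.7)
The plan is to exploit Matlis duality, which is exactly what links the two modules in question. By the definition of the canonical module, one has the isomorphism $\Hom_R(\omega, E) \cong \H^n_{\m}(R)$, where $E$ is the injective hull of $\mathsf k$. Since $R$ is complete, the Matlis duality functor $(-)^{\vee} := \Hom_R(-, E)$ is an exact contravariant equivalence between the category of finitely generated $R$-modules and the category of artinian $R$-modules, and it takes direct sums to direct sums.

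The first step I would carry out is to fix an indecomposable decomposition $\omega \cong \omega_1 \oplus \cdots \oplus \omega_s$ with $s = \summands_R(\omega)$, guaranteed by Krull--Remak--Schmidt as recorded in Notation~\ref{notn140107a}. Applying Matlis duality then yields
\[
\H^n_{\m}(R) \;\cong\; \omega^{\vee} \;\cong\; \omega_1^{\vee} \oplus \cdots \oplus \omega_s^{\vee},
\]
where each $\omega_i^{\vee}$ is an artinian $R$-module.

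Next I would verify that each $\omega_i^{\vee}$ remains indecomposable. Because $(-)^{\vee}$ is an equivalence of categories between finitely generated and artinian $R$-modules, it sends indecomposables to indecomposables: any nontrivial summand decomposition of $\omega_i^{\vee}$ would dualize back (using the natural isomorphism $M \cong M^{\vee\vee}$ on artinian $M$) to a nontrivial decomposition of $\omega_i$. Hence the display above is an indecomposable decomposition of the artinian module $\H^n_{\m}(R)$.

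Finally, I would invoke Krull--Remak--Schmidt for artinian modules (again via Notation~\ref{notn140107a}) to conclude that this decomposition has the same number of summands as any other, giving $\summands_R(\H^n_{\m}(R)) = s = \summands_R(\omega)$. There is no real obstacle here; the only delicate point worth spelling out carefully is that Matlis duality genuinely preserves indecomposability, but this is immediate from it being a category equivalence together with the biduality isomorphism on the relevant subcategories.
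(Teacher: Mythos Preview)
Your argument is correct and is essentially identical to the paper's own proof: both use that $\H^n_{\m}(R)\cong\omega^\vee$ under Matlis duality, that the duality preserves indecomposability between finitely generated and artinian modules, and then invoke Krull--Remak--Schmidt to match the counts of summands.
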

 
\begin{proof}  
Let $E$ be the injective hull of the residue field of $R$, and let $(-)^{\vee} = \Hom_R(-, E)$. 
By definition, we have $\H_{\mathfrak m}^n(R)\cong\omega^{\vee}$, so Matlis duality implies that
$\H_{\mathfrak m}^n(R)^{\vee}\cong\omega$. If $M$ is an indecomposable $R$-module (in particular, $M\neq 0$) that is  finitely generated (or artinian),
then Matlis duality implies that $M^\vee$ is an indecomposable $R$-module that is artinian  (or finitely generated). It follows that the  
decompositions of $\H_{\mathfrak m}^n(R)$ and $\omega$ into direct
sums of  indecomposables are in bijection, and the desired equality follows.
\end{proof}

\begin{proposition} \label{ciffb1} Under the assumptions in \ref{blanket assumptions}, 
we have $|m \sub \Spec(T)|=\summands_R(\omega)$.
\end{proposition}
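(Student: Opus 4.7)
The plan is to exploit the ring structure on $T=\Hom_R(\omega,\omega)$ (Fact~2.7) together with the fact that a module-finite algebra over a complete local ring is a finite product of complete local rings indexed by its maximal ideals.

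First I would invoke this structure theorem to write $T \cong \prod_{\fM \in \mspec(T)} T_\fM$, with corresponding orthogonal idempotents $e_1,\dots,e_r$ summing to~$1$, where $r=|\mspec(T)|$. Since $\omega$ is a $T$-module via the canonical identification $T=\End_R(\omega)$, these idempotents yield an $R$-module decomposition
\begin{equation*}
\omega \;=\; e_1\omega \oplus \cdots \oplus e_r\omega.
\end{equation*}
The claim reduces to showing (i) each $e_i\omega$ is an indecomposable $R$-module, and (ii) that the $e_i\omega$ are the indecomposable summands of $\omega$ in the Krull-Remak-Schmidt sense, from which the count $\summands_R(\omega)=r$ follows.

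The core of the argument is the identification $\End_R(e_i\omega) \cong T_i := e_i T$. For this I would use the block decomposition
\begin{equation*}
T \;=\; \End_R(\omega) \;=\; \bigoplus_{i,j}\Hom_R(e_j\omega,e_i\omega) \;=\; \bigoplus_{i,j} e_i T e_j,
\end{equation*}
and then compute $e_i T e_j$ against the ring decomposition $T=\prod_k T_k$: for $i\ne j$ the orthogonality $e_i e_j = 0$ forces $e_i T e_j = 0$, while for $i=j$ one gets $e_i T e_i = T_i$. Thus $\End_R(e_i\omega)=T_i$, which is a \emph{local} ring, so its only idempotents are $0$ and $1$; consequently $e_i\omega$ admits no nontrivial direct-sum decomposition as an $R$-module, proving (i). Since $\omega$ is finitely generated over the complete local ring $R$, the Krull-Remak-Schmidt theorem (Notation~\ref{notn140107a}) applies and yields (ii), completing the equality $\summands_R(\omega)=r=|\mspec(T)|$.

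The only potentially delicate step is the block-matrix identification $\Hom_R(e_j\omega,e_i\omega)=e_i T e_j$ as abelian groups under the isomorphism $T\cong\End_R(\omega)$; this requires verifying that the two-sided multiplication by idempotents on $T$ corresponds precisely to pre- and post-composition with the summand inclusions and projections of $\omega=\bigoplus_i e_i\omega$, which is a routine but necessary check. Apart from this, the argument is a direct application of the Krull-Remak-Schmidt theorem to the canonical decomposition forced by the product structure of~$T$.
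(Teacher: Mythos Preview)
Your argument is correct and in fact somewhat cleaner than the paper's. The paper proceeds by proving two inequalities separately: from an indecomposable decomposition $\omega=M_1\oplus\cdots\oplus M_t$ it extracts orthogonal idempotents in $T$ to get $s\geq t$, and for $s\leq t$ it invokes \cite[(2.2) k)]{HH} (which needs $R$ unmixed) to write $\omega=\omega_1\times\cdots\times\omega_s$ with each $\omega_i$ a nonzero canonical module for the local factor $T_i$; the general case is then reduced to the unmixed one by passing to $R/j(R)$. By contrast, you run the argument in one stroke: starting from the product decomposition of $T$, you decompose $\omega$ via the central idempotents and then \emph{prove} indecomposability of each $e_i\omega$ by identifying $\End_R(e_i\omega)$ with the local ring $T_i$. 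This bypasses both the unmixedness reduction and the specific Hochster--Huneke fact about how $\omega$ splits over the factors of $T$, replacing them with the elementary observation that a module with local endomorphism ring is indecomposable.

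Two small points worth making explicit in a final write-up: first, each $e_i\omega$ is nonzero because $\End_R(e_i\omega)\cong T_i\neq 0$ (equivalently, $e_i\neq 0$ as an endomorphism of $\omega$); second, the structure theorem you invoke (a module-finite extension of a complete local ring is a finite product of complete local rings) is exactly what the paper gets from \cite[(2.2) k)]{HH}, so you are not relying on anything beyond what the paper already cites.
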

 
\begin{proof}  
Case 1: $R$ is unmixed.
Set $s:=|m \sub \Spec(T)|$ and $t:=\summands_R(\omega)$.
Consider a decomposition $\omega = M_1 \oplus \cdots \oplus M_t$ such that each $M_i$ is indecomposable over $R$.
This gives rise to a system $e_1,\ldots,e_t$ of pairwise orthogonal idempotents in $\Hom_R(\omega,\omega)\cong T$,
namely, $e_i$ is the composition $M\to M_i\to M$ of the canonical maps given by the direct sum decomposition.
It follows that $T$ decomposes as a product $T\cong T_1\times\cdots\times T_t$ of non-zero rings.
In particular, we have 
$$s=|m \sub \Spec(T)|=|m \sub \Spec(T_1)|+\cdots+|m \sub \Spec(T_t)|\geq t.$$

For the reverse inequality, write $T = T_1 \times \cdots \times T_s$, where each $T_i$ is complete, local, and $(S_2)$.
Then the canonical module $\omega$ can be written as $\omega_1 \times \cdots \times \omega_s$, where each $\omega_i$
is the canonical module of $T_i$.  (See, e.g., \cite[(2.2) k)]{HH}, which requires that $R$ be unmixed.)
Since each $\omega_i$ is non-zero, we conclude that $s\leq \summands_R(\omega)=t$. This completes the proof in Case 1.

Case 2: the general case.
By~\cite[(2.2) d)]{HH}, we know that $\omega$ is a module over the complete, equidimensional, unmixed local ring
$\ol R:=R/j(R)$. (Moreover, $\omega$ is a canonical module for $\ol R$.)
It follows that the direct sum decompositions of $\omega$ into indecomposables over $R$ are in bijection with the direct sum decompositions of $\omega$ 
into indecomposables over $\ol R$, so we have $\summands_R(\omega)=\summands_{\ol R}(\omega)$.
By definition, $T$ is the $S_2$-ification of $\ol R$.
Thus, we have $|m \sub \Spec(T)|=\summands_{\ol R}(\omega)=\summands_R(\omega)$ by Case 1.
\end{proof}

\begin{proposition} \label{c=>d} Under the assumptions in \ref{blanket assumptions}, if  $J$ is an ideal of $R$ such that $\Ht_R(J) \geq 2$, then 
$\components(\Spec(R) -V(J))\leq |m \sub \Spec(T)|$.
\end{proposition}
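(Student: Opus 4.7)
The plan is to reduce to the case where $R$ is unmixed, pass the picture up to the decomposition $T=T_1\times\cdots\times T_s$, and then invoke the implication (c)$\Rightarrow$(d) of Fact~\ref{HH} on each complete local factor $T_i$. For the reduction, I observe that for $a\in j(R)$ and $\mathfrak p\in\Min(R)$, equidimensionality forces $\ann_R(a)\not\subseteq\mathfrak p$ (otherwise $\dim R/\ann_R(a)\geq\dim R/\mathfrak p=n$); choosing $s\in\ann_R(a)\setminus\mathfrak p$ with $sa=0$ then gives $a\in\mathfrak p$. Hence $j(R)$ lies in every minimal prime of $R$, so $\Spec(R)$ and $\Spec(R/j(R))$ agree as topological spaces, under which $V(J)$ corresponds to $V(\bar J)$ for $\bar J=(J+j(R))/j(R)$, still of height at least two. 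Since $T$ is by Definition~\ref{S2def} the $S_2$-ification of $R/j(R)$ as well, we may assume $R$ is unmixed.

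Next, writing $T=T_1\times\cdots\times T_s$ as a product of complete local rings (so $s=|\mspec(T)|$), each $T_i$ is $(S_2)$ (as a direct summand of $T$) and equidimensional of dimension $n=\dim R$. The equidimensionality will follow from the $(S_2)$-property of $T$ as an $R$-module: the $(S_1)$ consequence gives $\Ass_R(T)\subseteq\Min(R)$, while unmixedness of $R$ combined with the injection $R\hookrightarrow T$ gives $\Min(R)=\Ass_R(R)\subseteq\Ass_R(T)$; equality $\Ass_R(T)=\Min(R)$ then implies via the finite extension that every minimal prime of $T$ contracts to a minimal prime of $R$, cutting out an $n$-dimensional quotient. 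Set $U=\Spec(R)-V(J)$ and $V=\Spec(T)-V(JT)$; the finite surjection $\pi\colon\Spec(T)\to\Spec(R)$ satisfies $\pi^{-1}(U)=V$, and the continuous surjection $\pi|_V\colon V\to U$ yields $\components(U)\leq\components(V)$.

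To finish, I will show $\components(V)=s$ by proving that each $\Spec(T_i)-V(JT_i)$ is connected. Since $T_i/JT_i$ is a quotient of $T/JT$ and $T/JT$ is finite over $R/J$, we have $\dim T_i/JT_i\leq\dim R/J\leq n-2$; combined with the equidimensionality of $T_i$, this gives $\Ht_{T_i}(JT_i)\geq 2$. Each $T_i$ is then a complete local equidimensional $(S_2)$ ring whose own $S_2$-ification is the local ring $T_i$ itself, so the equivalence (c)$\Leftrightarrow$(d) of Fact~\ref{HH} applied to $T_i$ delivers the desired connectedness. Combining yields $\components(U)\leq\components(V)=s=|\mspec(T)|$.

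The main obstacle is establishing the equidimensionality of each $T_i$: this is exactly what converts the bound $\Ht_R(J)\geq 2$ into the bound $\Ht_{T_i}(JT_i)\geq 2$ that makes Fact~\ref{HH} applicable to $T_i$, and it is the one piece of structural input needed beyond Fact~\ref{HH} itself.
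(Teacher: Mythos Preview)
Your proof is correct and follows the same route as the paper's: decompose $T=\prod_i T_i$ into complete local factors, show $\Ht_{T_i}(JT_i)\geq 2$ so that Fact~\ref{HH} applies to each $T_i$, and then descend via the continuous surjection $\Spec(T)-V(JT)\to\Spec(R)-V(J)$. The only difference is expository: you reduce explicitly to the unmixed case and establish the equidimensionality of each $T_i$ from first principles (via $\Ass_R(T)=\Min(R)$ and a dimension count), whereas the paper simply invokes \cite[(3.5)b) and (2.2)k)]{HH} to obtain $\Ht_T(JT)=\Ht_R(J)$ and the needed properties of the $T_i$.
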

 
\begin{proof}  
Set $s:=|m \sub \Spec(T)|$.

Since $s=1$ if and only if $\Spec(R) -V(J)$ is connected for every ideal $J$ of $R$ such that $\Ht_R(J) \geq 2$ by Fact \ref{HH}, assume that $s\geq 2$. Let $Q$ be a prime ideal of $T$ such that $Q \supseteq JT$.  Then $\Ht_T(Q) \geq \Ht_T(JT) = \Ht_R(J) \geq 2$, by \cite[(3.5) b)]{HH}.  
Since $T$ decomposes as a product of local rings
$T = T_1 \times \cdots \times T_s$ by \cite[(2.2) k)]{HH}, 
there exist unique $i$ and $Q_i \in \Spec(T_i)$ such that 
$Q=T_1 \times \cdots \times T_{i-1} \times Q_i \times T_{i+1} \times \cdots \times T_s$.  In other words, there is a containment-respecting bijection
$\Spec(T)\rightleftarrows \disjointunion_{i=1}^s\Spec(T_i)$.
It is straightforward to show that, under this bijection, 
we have $Q \in V(JT)$ if and only if $Q_i \in V(JT_i)$, that is,
we have another containment-respecting bijection
$\Spec(T)- V(JT) \rightleftarrows \disjointunion_{i=1}^s (\Spec(T_i)- V(JT_i))$. 
It follows that these bijections are homeomorphisms for the Zariski toplogies and subspace topologies.

Next, we claim that $\Ht_{T_i}(JT_i) \geq 2$.  It suffices to show that if $Q_i \in V(JT_i)$, then $\Ht_{T_i}(Q_i) \geq 2$.  Taking $Q$
as above, we know that $Q \supseteq JT$.  Hence $\Ht_{T_i}(Q_i) = \Ht_T(Q) \geq 2$, as desired.  Moreover, for each $i$, the set $\Spec(T_i)- V(JT_i)$ is non-empty since $\Ht_{T_i} (JT_i) \geq 2$.  (If $JT_i \subseteq \mathfrak p_i$ for all $\mathfrak p_i \in \Spec(T_i)$, then $\Ht_{T_i} (JT_i) =0$, a contradiction.)

The implication of this is that each $\Spec(T_i)- V(JT_i)$ is connected by Fact \ref{HH}.  To be specific, the ring $T_i$ satisfies the assumptions \ref{blanket assumptions} as well as the $(S_2)$ condition, so $T_i$ is its own $S_2$-ification; and since it is local, the equivalent conditions of Fact \ref{HH} apply. 
From this, we conclude that $\components(\Spec(T)-V(JT))=s$.

We claim that $\components(\Spec(R)- V(J))\leq s$. 
This will follow easily from an exercise in topology:  if $X \to Y$ is a continuous and surjective map of topological spaces, then $\components(X)\geq \components(Y)$.  In particular, we will apply it to a map $f\colon\Spec(T)- V(JT)\to\Spec(R)- V(J)$.  This map is induced from the map $F\colon\Spec(T) \to \Spec(R)$, which is given by contraction, and which is onto since $R \to T$ is an integral extension.  We need to show that $f$
is well-defined and surjective.  The map $f$ is well-defined because $F^{-1}(V(J)) = V(JT)$ (see, e.g., \cite[Exercise 1.21(ii)]{AM}).  
With the surjectivity of $F$, this also implies that $f$ is surjective, establishing the claim and the result.
\end{proof}

\begin{proposition} \label{prop140106b}
Under the assumptions in \ref{blanket assumptions}, 
we have $\components(\Gamma_R)=|m \sub \Spec(T)|$. 
\end{proposition}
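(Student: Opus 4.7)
The plan is to exploit the product decomposition $T \cong T_1 \times \cdots \times T_s$ with $s = |m\sub\Spec(T)|$ from \cite[(2.2) k)]{HH}, where each $T_i$ is complete, local, equidimensional of dimension $n$, and equal to its own $S_2$-ification. This gives $\Min(T) = \bigsqcup_i \Min(T_i)$, which I will transfer along the contraction bijection $\Min(T) \leftrightarrow \Min(R)$ (arising from $T$ being a finite subring of the total ring of quotients of $R/j(R)$) to a partition $\Min(R) = S_1 \sqcup \cdots \sqcup S_s$. The target is then to show this partition coincides with the decomposition of $\Min(R)$ into vertex sets of the connected components of $\Gamma_R$, which will give $\components(\Gamma_R) = s = |m\sub\Spec(T)|$.

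For the first direction, that each $S_i$ lies in a single connected component of $\Gamma_R$, I will apply Fact \ref{HH} to the complete local equidimensional $(S_2)$-ring $T_i$: since $T_i$ equals its own (local) $S_2$-ification, condition (c) holds for $T_i$, so condition (e) yields that $\Gamma_{T_i}$ is connected. Transferring edges to $\Gamma_R$ will be routine: for adjacent $P_1, P_2 \in \Min(T_i)$ with distinct contractions $\p_1, \p_2 \in \Min(R)$, the inclusion $(\p_1+\p_2)T \subseteq P_1+P_2$ and the height identity $\Ht_T(JT) = \Ht_R(J)$ from \cite[(3.5) b)]{HH} force
\begin{equation*}
1 \;\leq\; \Ht_R(\p_1 + \p_2) \;=\; \Ht_T\bigl((\p_1+\p_2)T\bigr) \;\leq\; \Ht_T(P_1+P_2) \;=\; 1,
\end{equation*}
so $\p_1$ and $\p_2$ are adjacent in $\Gamma_R$.

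For the reverse direction, I will suppose $\p, \q \in \Min(R)$ are adjacent in $\Gamma_R$ and let $P, Q \in \Min(T)$ be the primes corresponding to them. The pivotal step is that $P$ is the \emph{unique} minimal prime of $T$ over $\p T$: any other such prime $P'$ would contract to a minimal prime of $R/j(R)$ containing $\bar\p = \p/j(R)$, forcing $P' = P$ by the bijection. Consequently every prime of $T$ containing $\p T$ contains $P$, and similarly for $\q T$ and $Q$. Since $\Ht_T((\p+\q)T) = \Ht_R(\p+\q) = 1$, the ideal $(\p+\q)T$ is proper and is contained in some prime $\mathfrak{M}$ of $T$; then $\mathfrak{M} \supseteq P+Q$, so $P+Q \neq T$. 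A direct product calculation with the factor idempotents $e_1, \ldots, e_s$ of $T_1 \times \cdots \times T_s$ will then rule out $P \in \Spec(T_i)$ and $Q \in \Spec(T_j)$ for $i \neq j$: in that case each $e_k$ with $k \neq i$ lies in $P$ while $e_i$ lies in $Q$, so $1 = \sum_k e_k \in P+Q$. Hence $P$ and $Q$ lie in a common $\Spec(T_i)$, i.e., $\p, \q \in S_i$.

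I expect the main obstacle to be not the graph argument itself but the careful setup of the contraction bijection $\Min(T) \leftrightarrow \Min(R)$ together with the uniqueness claim that $P$ is the only minimal prime of $T$ over $\p T$. Both rest on $T$ being a module-finite extension of the unmixed ring $R/j(R)$ inside its total ring of quotients, which ensures that $\Min(T)$ is in natural bijection with $\Min(R/j(R)) = \Min(R)$ and that each $P \in \Min(T)$ is the unique minimal prime of $T$ lying over the extension of the corresponding minimal prime of $R$. Once these structural facts are in hand, the two directions above combine to give the desired equality.
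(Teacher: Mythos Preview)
Your proposal is correct and follows essentially the same strategy as the paper: decompose $T$ as $T_1\times\cdots\times T_s$, transport the resulting partition of $\Min(T)$ along the bijection $\Min(T)\leftrightarrow\Min(R)$ from \cite[(3.5)]{HH}, and then argue separately that (i) distinct blocks of the partition cannot be joined by an edge of $\Gamma_R$, and (ii) each block is connected because $\Gamma_{T_i}$ is connected by Fact~\ref{HH}. The only notable difference is expository: where the paper simply invokes ``the proof of \cite[(3.6)]{HH}'' for step~(i) and the height-one bijection of \cite[(3.5)]{HH} for step~(ii), you unpack both steps explicitly---using the idempotent calculation $1=\sum_k e_k\in P+Q$ to rule out cross-edges, and the height identity $\Ht_T(JT)=\Ht_R(J)$ together with $(\p_1+\p_2)T\subseteq P_1+P_2$ to push edges down from $\Gamma_{T_i}$ to $\Gamma_R$. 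Your emphasis on the uniqueness of the minimal prime of $T$ over $\p T$ is a useful clarification that the paper leaves implicit in its citation of \cite[(3.5)]{HH}.
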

 
\begin{proof}  Since $T$ is local if and only if $\Gamma_R$ is connected by Fact \ref{HH}, assume that $s:=|m \sub \Spec(T)|  \geq 2$.
Then $\dim (R) \geq 2$, as per Example \ref{dim1}.  
Write $T = T_1 \times \cdots \times T_s$, where each $T_i$ is complete, local, and $(S_2)$.
Let $\mathfrak N_1, \dots, \mathfrak N_s$ be the maximal ideals of $T$.  
Under the bijection $\Spec(T)\rightleftarrows \disjointunion_{i=1}^s\Spec(T_i)$
from the proof of Proposition~\ref{c=>d}, 
we have $\Min(T)\rightleftarrows \disjointunion_{i=1}^s\Min(T_i)$.
Next, by \cite[(3.5)]{HH}, there is a bijection between $\Min(T)\rightleftarrows\Min(R)$.
Let $\mathcal A_i\subseteq\Min(R)$ be the subset corresponding to $\Min(T_i)$.
As per the proof of \cite[(3.6)]{HH}, it is impossible to have an edge joining a vertex in $\mathcal A_i$ to a vertex in $\mathcal A_j$ when $i \neq j$.  
Therefore, the number of connected components of $\Gamma_R$ is greater than or equal to $s$, i.e., $\components(\Gamma_R)\geq|m \sub \Spec(T)|$.  

It remains to show that each subgraph of $\Gamma_R$ induced by $\mathcal A_i$
is connected.  We claim that this follows from the fact that each $T_i$ is complete, local, and $(S_2)$. 
Indeed, by Fact \ref{HH}, each graph $\Gamma_{T_i}$ is connected.  This means that any pair of distinct minimal primes in $T_i$ generates a height one ideal in $T_i$.  Thus, since the minimal primes of $T$ are in bijection with the minimal primes of $R$ and the height one primes of $T$ are in bijection with the height one primes of $R$ (as per \cite[(3.5)]{HH}), each subgraph composed of vertices from $\mathcal A_i$ is connected, as desired.
\end{proof}

In the next result, note that the case $\dim(R)\leq 1$ is treated in Example~\ref{dim1}.

\begin{proposition} \label{ht2} Under the assumptions in \ref{blanket assumptions}, 
if $\dim(R)\geq 2$, 
then there is an ideal $I$ of $R$ such that $\Ht_R (I) = 2$ and
$\components(\Spec(R) -V(I))= \components(\Gamma_R)$.
\end{proposition}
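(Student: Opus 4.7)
The plan is as follows. Combining Propositions~\ref{c=>d} and~\ref{prop140106b} already gives $\components(\Spec(R)-V(J)) \leq \components(\Gamma_R)$ for every ideal $J$ of $R$ with $\Ht_R(J) \geq 2$, so I only need to exhibit one such $I$ with $\Ht_R(I) = 2$ for which the reverse inequality holds. The idea is to build $I$ as a two-generated subideal of a ``separating ideal'' $\mathcal{K}$ whose zero-locus is precisely the set of primes of $R$ lying above minimal primes from two distinct connected components of $\Gamma_R$.

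Let $\mathcal{C}_1,\ldots,\mathcal{C}_c$ denote the connected components of $\Gamma_R$, set $J_i = \bigcap_{\fp\in\mathcal{C}_i}\fp$ for each $i$, and define
\[
\mathcal{K} \;=\; \bigcap_{i<j}(J_i+J_j),
\]
with the convention $\mathcal{K} = R$ if $c=1$. Every minimal prime $\fr$ of $\mathcal{K}$ contains $J_i+J_j$ for some $i\neq j$, so it contains a pair $\fp\in\mathcal{C}_i$, $\fq\in\mathcal{C}_j$ of minimal primes of $R$; since such $\fp,\fq$ are non-adjacent in $\Gamma_R$, Definition~\ref{graph} forces $\Ht_R(\fp+\fq)\geq 2$, whence $\Ht_R(\fr)\geq 2$ and hence $\Ht_R(\mathcal{K})\geq 2$. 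A standard parameter-sequence argument then yields elements $a_1, a_2\in\mathcal{K}$ with $\Ht_R(a_1,a_2)=2$ (the hypothesis $\dim R\geq 2$ enters here to handle the case $c=1$ in which $\mathcal{K}=R$). Take $I=(a_1,a_2)$.

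To verify $\components(\Spec(R)-V(I))=c$, I will set $X_i = V(J_i)\cap(\Spec(R)-V(I))$ and show that $\{X_1,\ldots,X_c\}$ is a partition of $\Spec(R)-V(I)$ into non-empty connected clopens. Coverage is immediate because every prime of $R$ contains some minimal prime; pairwise disjointness follows from $V(J_i)\cap V(J_j) = V(J_i+J_j) \subseteq V(\mathcal{K}) \subseteq V(I)$; and non-emptiness holds because $\Ht_R(I)=2>0=\Ht_R(\fp)$ shows any $\fp\in\mathcal{C}_i$ avoids $V(I)$. To see each $X_i$ is connected, I will use a path $\fp_0,\fp_1,\ldots,\fp_n$ in $\mathcal{C}_i\subseteq\Gamma_R$ joining any two prescribed minimal primes: the edge condition $\Ht_R(\fp_k+\fp_{k+1})=1$ supplies a height-one prime of $R$, necessarily outside $V(I)$ since $\Ht_R(I)=2$, that lies in $V(\fp_k)\cap V(\fp_{k+1})$ and thereby glues the consecutive irreducible pieces $V(\fp_k)\cap(\Spec(R)-V(I))$ into a single connected set; since every point of $X_i$ lies in such a piece, $X_i$ itself is connected. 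The step I expect to be the main obstacle is producing $a_1,a_2\in\mathcal{K}$ with $\Ht_R(a_1,a_2)=2$: this requires two successive prime avoidances (first against the minimal primes of $R$, then against the height-one primes minimal over $(a_1)$) and uses the catenarity of the complete local ring $R$ to control the heights at each step.
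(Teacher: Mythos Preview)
Your proof is correct and shares the paper's core construction: you define the same separating ideal $\mathcal{K}=\bigcap_{i<j}(J_i+J_j)$ (the paper writes it $I=\bigcap_{i\neq j}\fs_{ij}$) and establish $\Ht_R(\mathcal{K})\geq 2$ by the same non-adjacency argument. The two proofs diverge only in secondary details.

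First, to pass from height $\geq 2$ to height exactly $2$: the paper keeps the ideal ``large'' by replacing one factor $\fs_{12}$ with a carefully chosen $\ft_{12}$ containing a height-$2$ prime, whereas you pass to a two-generated subideal $(a_1,a_2)\subseteq\mathcal{K}$ via two prime-avoidance steps and Krull's height theorem. Your route is cleaner and avoids the case split; incidentally, catenarity is not actually needed---Krull's bound $\Ht_R(a_1,a_2)\leq 2$ plus the avoidance choices already pin down the height.

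Second, the paper only verifies that the pieces $V^{\circ}(\fr_i)$ are non-empty, closed, and pairwise disjoint, getting $\components(\Spec(R)-V(I))\geq t$ and tacitly relying on Propositions~\ref{c=>d} and~\ref{prop140106b} for the reverse inequality (as you also note at the outset). Your additional direct argument that each $X_i$ is connected---linking the irreducible open sets $V(\fp_k)\cap(\Spec(R)-V(I))$ along a path in $\mathcal C_i$ through height-one primes that survive because $\Ht_R(I)=2$---is a nice self-contained alternative that makes the proposition independent of those earlier results.
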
 
 
\begin{proof}  
Set $t=\components(\Gamma_R)$.
If $t=1$, then the desired conclusion follows from Fact~\ref{HH} for every ideal of height 2. So we assume that $t\geq 2$ for the rest of this proof.

Write $\Gamma_R = \Gamma_1 \disjointunion \cdots \disjointunion \Gamma_t$, where the $\Gamma_i$ are the connected components of the graph $\Gamma_R$.  For  $i=1, \ldots t$, let $V_i=\{\mathfrak p_{i1}, \mathfrak p_{i2}, \dots, \mathfrak p_{i a_i} \}$ be the vertex set of $\Gamma_i$, that is, the set of
minimal primes of $R$ in the component $\Gamma_i$.   In particular, we are assuming that $a_i=|V_i|$ and $V_i\bigcap V_j=\emptyset$ for $i \neq j$.  For distinct elements $i,j\in\{1,\ldots, t\}$, set 
\begin{align*}
\fr_i &= \bigcap_{k = 1}^{a_i}  \fp_{ik}
&\fs_{ij} &= \fr_i + \fr_j
&I &= \bigcap_{i \neq j} \fs_{ij}.
\end{align*}

Define $V^{\circ}(\fr_i)$ to be those primes of $R$ containing $\fr_i$, but not $I$; i.e., $V^{\circ}(\fr_i)= \left(\Spec(R)- V(I)\right) \bigcap V(\fr_i)$.  

Claim 1: 
$\Spec(R)- V(I)=V^{\circ}(\fr_1) \disjointunion \cdots \disjointunion V^{\circ}(\fr_t)$.  
Since $\bigcap_i\fr_i$ is the intersection of all the minimal primes of $R$, we have 
$\Spec(R)- V(I)=V^{\circ}(\fr_1) \bigcup \cdots \bigcup V^{\circ}(\fr_t)$. 
If $P \in V^{\circ}(\fr_i) \bigcap V^{\circ}(\fr_j)$ for some $i \neq j$, then $P \supseteq \fr_i + \fr_j = \fs_{ij} \supseteq I$, a contradiction to the definition of $V^{\circ}(\fr_i)$.  Therefore, the union is disjoint.

Claim 2: $\Ht_R(I)\geq 2$.  Note that $\Ht_R(I) = \min\{\Ht_R(\fs_{ij}) \mid 1 \leq i < j \leq t \}$.  If $\Ht_R (\fs_{ij}) \leq 1$ for some pair $i \neq j$, then there exists a prime ideal $P$ of $R$ such that $\Ht_R(P)=1$ and
$$P \supseteq \fs_{ij}=\fr_i + \fr_j = (\mathfrak p_{i1} \bigcap \mathfrak p_{i2} \bigcap \dots \bigcap \mathfrak p_{i a_i}) + (\mathfrak p_{j1} \bigcap \mathfrak p_{j2} \bigcap \dots \bigcap \mathfrak p_{j a_j}).$$
It follows that $P\supseteq \mathfrak p_{i1} \bigcap \dots \bigcap \mathfrak p_{i a_i}$, so the primeness of $P$ implies that $P \supseteq \fp_{ik}$ for some $k$.
Similarly, we have $P \supseteq \fp_{jl}$ for some $l$,
so $P \supseteq \fp_{ik}+\fp_{jl}$.  It follows that $\Ht_R(\fp_{ik}+\fp_{jl})\leq 1$, so the vertices $\fp_{ik}$ and $\fp_{jl}$ of $\Gamma_R$ are adjacent.
This contradicts the fact that $V_i$ and $V_j$ are disjoint.  Therefore, we have $\Ht_R (\fs_{ij})  \geq 2$ for all $1 \leq i < j \leq t$; i.e., $\Ht_R (I) \geq 2$.  

Claim 3: each $V^{\circ}(\fr_i)$ is non-empty and closed in $\Spec(R)- V(I)$.
It is closed by definition of the topology of $\Spec(R)- V(I)$, which is induced by the Zariski topology on $\Spec(R)$.
Now, $V^{\circ}(\fr_i)$ is non-empty since $\fp_{im}\in V^{\circ}(\fr_i)$ for $m=1,\ldots,a_i$. Indeed, we have $\fp_{im}\in V(\fr_i)$ by definition of $\fr_i$.
And if $\fp_{im}\in V(I)$, then $\fp_{im}\supseteq\fs_{pq} = \fr_p + \fr_q$ for some $p\neq q$.
As in the proof of Claim 2, this implies that $\Ht_R(\fp_{im})\geq 1$, contradicting the minimality of $\fp_{im}$

If $\Ht_R (I) = 2$, then Claims 1--3 imply that
$I$ has the desired properties.  However, it is possible that $\Ht _R(I) \neq 2$.  We deal with this case now.  

Suppose that $\Ht_R (I) > 2$.  Then $\Ht_R (\fs_{ij}) \geq 3$ for each pair $i \neq j$.  For the first such pair (only), write
$\sqrt{\fs_{12}} = \bigcap_{\ell = 1}^m Q_{\ell}$ where each $Q_{\ell}$ is a prime such that $\Ht_R(Q_{\ell})\geq 3$.
Let $\mathfrak q$ be a height two prime in $Q_1$ that contains a minimal prime, say $\fp_{1j}$.  
Then $\mathfrak r_1 \subseteq \mathfrak q$.  Consider the ideals 
$$\mathfrak t_{12} = \mathfrak q \bigcap \left(\bigcap_{\ell \geq 2}  Q_{\ell}\right) \quad {\text{ and  } } \quad I^* = \mathfrak t_{12} \bigcap \left(\bigcap_{\{i,j\} \neq \{1, 2\}} \mathfrak s_{ij}\right).$$

Similarly as above, define $V^{*}(\fr_i)$ to be those primes of $R$ containing $\fr_i$, but not $I^*$; i.e., $V^{*}(\fr_i)= \left(\Spec(R)- V(I^*)\right) \bigcap V(\fr_i)$.  
If $P \in V^{*}(\fr_i) \bigcap V^{*}(\fr_j)$ for any $i \neq j$, then $P \supseteq \fr_i + \fr_j = \fs_{ij}$.  In the case that $\{i, j\} \neq \{1,2\}$, 
we have $\fs_{ij} \supseteq I^*$, a contradiction as above.  Otherwise, $P \supseteq \mathfrak r_1 + \mathfrak r_2= \mathfrak s_{12}$, and because $P$ is prime, $P \supseteq \sqrt{\mathfrak s_{12}} \supseteq \mathfrak t_{12} \supseteq I^*$, which is again a contradiction.  By construction, the height of $I^*$ is exactly two.  (To be specific, by construction $\mathfrak t_{12}$ has height two, and all of the other terms in the intersection defining $I^*$ have height at least two, by the work in the previous paragraph.)  In this case, $I^*$ has the desired properties.
\end{proof}

\begin{corollary} Under the assumptions in 4.1, if $\dim(R)=2$, then $\components(\Spec^{\circ}(R))=\beta(\Gamma_R)$ where $\Spec^{\circ}(R)=\Spec(R)-\{\m\}$ is the punctured spectrum of $R$.
\end{corollary}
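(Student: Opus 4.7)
The plan is to deduce the corollary directly from Theorem~\ref{thm140103a}. The crux is that, when $\dim(R)=2$, every proper ideal $J$ of $R$ with $\Ht_R(J)\geq 2$ cuts out the same open subspace of $\Spec(R)$, namely $\Spec^{\circ}(R)$, so the maximum appearing in item (d) of the theorem collapses to the single quantity $\components(\Spec^{\circ}(R))$.

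First, I would record the elementary dimension-theoretic observation that $\m$ is the unique prime of $R$ of height at least two. Indeed, any prime $\p$ of $R$ satisfies $\p\subseteq\m$ and hence $\Ht_R(\p)\leq\Ht_R(\m)=2$; if $\Ht_R(\p)=2$, then the inclusion $\p\subseteq\m$ together with $\Ht_R(\m)=2$ forces $\p=\m$. Consequently, for any proper ideal $J$ of $R$ with $\Ht_R(J)\geq 2$, we have $V(J)\subseteq\{\m\}$, and since $J\subseteq\m$ gives $\m\in V(J)$, in fact $V(J)=\{\m\}$. Therefore
\begin{equation*}
\Spec(R)-V(J)=\Spec(R)-\{\m\}=\Spec^{\circ}(R).
\end{equation*}

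To finish, I would note that the maximal ideal $\m$ itself satisfies $\Ht_R(\m)=2$, so it is a legitimate choice in the maximum of Theorem~\ref{thm140103a}(d); by the preceding paragraph, every legitimate choice yields exactly the space $\Spec^{\circ}(R)$, so that maximum equals $\components(\Spec^{\circ}(R))$. Equating items (d) and (e) of Theorem~\ref{thm140103a} then gives $\components(\Spec^{\circ}(R))=\components(\Gamma_R)=\beta(\Gamma_R)$, as desired. There is essentially no obstacle: the corollary is an immediate consequence of the main theorem, the only ingredient being the one-line dimension-theoretic fact that $V(J)=\{\m\}$ for every proper ideal of height at least two in a two-dimensional local ring.
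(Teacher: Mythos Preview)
Your proof is correct and follows essentially the same line as the paper's. The paper's proof is a single sentence invoking the preceding Proposition~\ref{ht2} (rather than Theorem~\ref{thm140103a}, whose proof is only assembled immediately afterward) together with the observation that, when $\dim(R)=2$, the condition $\Ht_R(I)=2$ is equivalent to $\sqrt{I}=\m$; this is exactly your dimension-theoretic fact that $V(J)=\{\m\}$ for any proper ideal of height at least two.
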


\begin{proof} Since $\dim(R)=2$, the condition $\Ht_R(I)=2$ is equivalent to $\sqrt{I}=\m$.
\end{proof}

\begin{chunk}[Proof of Theorem~\ref{thm140103a}] \label{proof140106a}
If $\dim(R)\leq 1$ then the quantities (a)--(e) in Theorem~\ref{thm140103a} are all 1 by Example~\ref{dim1}.
If $\dim(R)\geq 2$, then the desired (in)equalities follow from Propositions~\ref{prop140106a}--\ref{ht2}.
\qed
\end{chunk}


\section{Graph Labeling and Realizing Graphs as $\Gamma_R$}
\label{sec140103e}


In this section, we investigate a labeling for graphs $G$ that allow us to construct rings $R$ 
such that $\Gamma_R$ is graph-isomorphic to $G$. Intuitively, the labeling works as follows. Each vertex of $G$ is assigned a distinct
``address'' consisting of $s$ distinct numbers, from a set of size $n$, such that two vertices are adjacent if and only if
their addresses differ by exactly one number. (Compare this with Fact~\ref{fact140103a}.) More precisely, we have the following.

\begin{notation}  Let $G$ be a graph with vertex set $V$, and set $d=|V|$.
Fix positive integers $n$ and $s$.
Set $[n]=\{1,\ldots,n\}$, and let
$\binom{[n]}{s}$ denote the set of subsets of $[n]$ with cardinality $s$.
\end{notation}

\begin{definition} \label{labels} 
An \emph{admissible labeling} of $G$ is an injective function $\phi\colon V\into \binom{[n]}{s}$,
for some choice of $n$ and $s$, satisfying
the following conditions:
\begin{enumerate}[(1)]
\item
$\phi(v_1)\bigcup\cdots\bigcup\phi(v_d)=[n]$, and
\item 
for all vertices $v$ and $w$, we have $v$ adjacent to $w$ in $G$ if and only if $|\phi(v)\bigcap\phi(w)|=s-1$,
that is, if and only if $|\phi(v)\bigcup\phi(w)|=s+1$.
\end{enumerate}
\end{definition}

\begin{remark}\label{rmk140103a}
Several notions of ``graph labelings'' exist in the literature. However, we have not been able to find this one in the literature.
\end{remark}

As the terminology suggests, we visualize admissible labelings by placing labels on the vertices of a graph, as in the following example.

\begin{example}  
Here are two admissible labelings of 
the Petersen graph with $d = 10$, $n = 6$, and $s= 3$.

\begin{align*}
\begin{pgfpicture}{7cm}{-2cm}{-2cm}{4cm}
\pgfnodecircle{Node1}[fill]{\pgfxy(1.5,-1)}{0.1cm}
\pgfnodecircle{Node2}[fill]{\pgfxy(4.5,-1)}{0.1cm}
\pgfnodecircle{Node3}[fill]{\pgfxy(6, 2)}{0.1cm}
\pgfnodecircle{Node4}[fill]{\pgfxy(3, 4)}{0.1cm}
\pgfnodecircle{Node5}[fill]{\pgfxy(0,2)}{0.1cm}
\pgfnodecircle{Node6}[fill]{\pgfxy(2,0)}{0.1cm}
\pgfnodecircle{Node7}[fill]{\pgfxy(4,0)}{0.1cm}
\pgfnodecircle{Node8}[fill]{\pgfxy(4.75, 1.5)}{0.1cm}
\pgfnodecircle{Node9}[fill]{\pgfxy(3, 3)}{0.1cm}
\pgfnodecircle{Node10}[fill]{\pgfxy(1,1.5)}{0.1cm}
\pgfnodecircle{Node11}[fill]{\pgfxy(9.5,-1)}{0.1cm}
\pgfnodecircle{Node12}[fill]{\pgfxy(12.5,-1)}{0.1cm}
\pgfnodecircle{Node13}[fill]{\pgfxy(14, 2)}{0.1cm}
\pgfnodecircle{Node14}[fill]{\pgfxy(11, 4)}{0.1cm}
\pgfnodecircle{Node15}[fill]{\pgfxy(8,2)}{0.1cm}
\pgfnodecircle{Node16}[fill]{\pgfxy(10,0)}{0.1cm}
\pgfnodecircle{Node17}[fill]{\pgfxy(12,0)}{0.1cm}
\pgfnodecircle{Node18}[fill]{\pgfxy(12.75, 1.5)}{0.1cm}
\pgfnodecircle{Node19}[fill]{\pgfxy(11, 3)}{0.1cm}
\pgfnodecircle{Node20}[fill]{\pgfxy(9,1.5)}{0.1cm}
\pgfnodeconnline{Node1}{Node2}
\pgfnodeconnline{Node2}{Node3}
\pgfnodeconnline{Node3}{Node4}
\pgfnodeconnline{Node4}{Node5}
\pgfnodeconnline{Node1}{Node5}
\pgfnodeconnline{Node1}{Node6}
\pgfnodeconnline{Node2}{Node7}
\pgfnodeconnline{Node3}{Node8}
\pgfnodeconnline{Node4}{Node9}
\pgfnodeconnline{Node5}{Node10}
\pgfnodeconnline{Node6}{Node8}
\pgfnodeconnline{Node6}{Node9}
\pgfnodeconnline{Node7}{Node9}
\pgfnodeconnline{Node7}{Node10}
\pgfnodeconnline{Node8}{Node10}
\pgfnodeconnline{Node11}{Node12}
\pgfnodeconnline{Node12}{Node13}
\pgfnodeconnline{Node13}{Node14}
\pgfnodeconnline{Node14}{Node15}
\pgfnodeconnline{Node11}{Node15}
\pgfnodeconnline{Node11}{Node16}
\pgfnodeconnline{Node12}{Node17}
\pgfnodeconnline{Node13}{Node18}
\pgfnodeconnline{Node14}{Node19}
\pgfnodeconnline{Node15}{Node20}
\pgfnodeconnline{Node16}{Node18}
\pgfnodeconnline{Node16}{Node19}
\pgfnodeconnline{Node17}{Node19}
\pgfnodeconnline{Node17}{Node20}
\pgfnodeconnline{Node18}{Node20}
\pgfputat{\pgfxy(1.2, -1.3)}{\pgfbox[left,center]{146}}
\pgfputat{\pgfxy(4.3,-1.3)}{\pgfbox[left,center]{346}}
\pgfputat{\pgfxy(6.15, 2.2)}{\pgfbox[left,center]{236}}
\pgfputat{\pgfxy(2.75, 4.3)}{\pgfbox[left,center]{123}}
\pgfputat{\pgfxy(-.65, 2.25)}{\pgfbox[left,center]{124}}
\pgfputat{\pgfxy(2, -.25)}{\pgfbox[left,center]{156}}
\pgfputat{\pgfxy(4.15,-.15)}{\pgfbox[left,center]{345}}
\pgfputat{\pgfxy(4.5, 1.15)}{\pgfbox[left,center]{246}}
\pgfputat{\pgfxy(3.15, 3.15)}{\pgfbox[left,center]{135}}
\pgfputat{\pgfxy(.75, 1.85)}{\pgfbox[left,center]{245}}
\pgfputat{\pgfxy(9.2, -1.3)}{\pgfbox[left,center]{245}}
\pgfputat{\pgfxy(12.3,-1.3)}{\pgfbox[left,center]{256}}
\pgfputat{\pgfxy(14.1, 2.2)}{\pgfbox[left,center]{236}}
\pgfputat{\pgfxy(10.75, 4.3)}{\pgfbox[left,center]{123}}
\pgfputat{\pgfxy(7.45, 2.3)}{\pgfbox[left,center]{124}}
\pgfputat{\pgfxy(10, -.25)}{\pgfbox[left,center]{345}}
\pgfputat{\pgfxy(12.15,-.15)}{\pgfbox[left,center]{156}}
\pgfputat{\pgfxy(12.5, 1.15)}{\pgfbox[left,center]{346}}
\pgfputat{\pgfxy(11.15, 3.15)}{\pgfbox[left,center]{135}}
\pgfputat{\pgfxy(8.75, 1.85)}{\pgfbox[left,center]{146}}
\end{pgfpicture}
\end{align*}
It is straightforward (though tedious) to show that these labelings are distinct up to graph isomorphism
and permutation of the elements of $[6]$, and that these are the only two admissible labelings with $d = 10$, $n = 6$, and $s= 3$, up to graph isomorphism
and permutation of the elements of $[6]$.
\end{example}


\begin{lemma} \label{lem140103b} Let $G$ be a graph.
\begin{enumerate}
\item If $G$ has an admissible labeling $\phi\colon V\into \binom{[n]}{s}$, then so does each induced subgraph $G'$. 
\item $G$ has an admissible labeling $\phi\colon V\into \binom{[n]}{1}$ if and only if $G$ is complete. 
\end{enumerate}
\end{lemma}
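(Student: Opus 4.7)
For part (1), the plan is to restrict $\phi$ to $V(G')$ and then relabel to restore the covering condition. Let $S = \bigcup_{v \in V(G')} \phi(v) \subseteq [n]$ and set $n' = |S|$; choose any bijection $\tau\colon S \to [n']$, and define $\phi'\colon V(G') \into \binom{[n']}{s}$ by $\phi'(v) = \tau(\phi(v))$, i.e., apply $\tau$ elementwise to the label $\phi(v)$. Three things need checking: injectivity of $\phi'$, which follows because $\tau$ induces a bijection on $s$-subsets of $S$; the covering condition $\bigcup_{v \in V(G')} \phi'(v) = [n']$, which is immediate from the choice of $\tau$; and the adjacency condition, which uses that $|\phi'(v) \cap \phi'(w)| = |\phi(v) \cap \phi(w)|$ by injectivity of $\tau$, combined with the fact that adjacency in the induced subgraph $G'$ is by definition the restriction of adjacency in $G$ to $V(G')$.

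For the forward direction of part (2), suppose $\phi\colon V \into \binom{[n]}{1}$ is an admissible labeling, so each $\phi(v)$ is a singleton in $[n]$. Since $\phi$ is injective, distinct vertices $v \ne w$ correspond to distinct singletons, forcing $\phi(v) \cap \phi(w) = \emptyset$, i.e., $|\phi(v) \cap \phi(w)| = 0 = s - 1$. Condition~(2) of Definition~\ref{labels} then yields that every pair of distinct vertices is adjacent in $G$, so $G$ is complete.

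For the converse, assume $G = K_d$ on vertices $v_1, \dots, v_d$, take $n = d$ and $s = 1$, and define $\phi(v_i) = \{i\}$. Injectivity and the covering condition $\phi(v_1) \bigcup \cdots \bigcup \phi(v_d) = [d]$ are immediate. For adjacency, $v_i$ and $v_j$ with $i \ne j$ are adjacent in $K_d$ and satisfy $|\phi(v_i) \cap \phi(v_j)| = 0 = s - 1$, as required.

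I anticipate no substantial obstacle here; both parts are mostly bookkeeping. The single point that warrants explicit verification is that the elementwise relabeling $\tau$ in part~(1) preserves intersection cardinalities (immediate from injectivity of $\tau$), together with the remark that restricting to an induced subgraph does not alter adjacencies among the retained vertices, so condition~(2) transfers without modification.
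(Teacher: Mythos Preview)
Your proof is correct and follows essentially the same approach as the paper. The paper's argument for part~(1) is phrased as ``re-order the elements of $[n]$'' rather than introducing an explicit bijection $\tau$, but this is the same idea; for part~(2) the paper simply declares it ``straightforward,'' so your version is actually more complete.
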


\begin{proof} 
(1) Let $V'$ be the vertex set for $G'$, and re-order the elements of $[n]$ to assume that
$\bigcup_{v\in V'}\phi(v)$ is of the form $[n']$ for some $n'\leq n$. 
Define $\phi'\colon V'\into \binom{[n']}{s}$ by the formula $\phi'(v):=\phi(v)$. 
Since two vertices in $V'$ are adjacent in $G'$ if and only if they are adjacent in $G$, it follows readily by definition
that $\phi'$ is an admissible labeling of $G'$.

(2) The proof of this is straightforward.
\end{proof}

\begin{remark} 
The converse of Lemma \ref{lem140103b}(1) also holds trivially since $G$ is an induced subgraph of itself. 
However, there exist graphs $G$ such that every \emph{proper} induced subgraph has an admissible labeling, but $G$ does not admit an admissible labeling. Specifically, Proposition \ref{prop140103a} exhibits a graph on five vertices that does not have an admissible labeling. 
Note that this example has the smallest possible number of vertices, since every graph on at most four vertices has an admissible labeling.
Indeed, $C_4, P_3, K_4 , K_{1,3}$, and the totally disconnected graph fall under the purview of Proposition \ref{prop140103ax}.  The graph $K_{1,1,2}$ appears in the proof of Proposition \ref{prop140103a}, and the remaining connected graph, which is a triangle with one pendant, can easily be obtained from Graph (8) in Remark \ref{rmk140107a} by deleting the extra edge.  The other graphs are disconnected and can obtained from components of smaller graphs, all of which are detailed in Examples~\ref{dim1}-\ref{min=3}.

\end{remark}

\begin{para}[Proof of Theorem~\ref{thm140103b}]\label{proof140103a}
Let $\phi\colon V\into \binom{[n]}{s}$ be an admissible labeling of $G$.
Set $Q=\mathsf k[\![X_1,\ldots,X_n]\!]$.
For each subset $\mathcal A=\{i_1,\ldots,i_s\}$ in $\binom{[n]}{s}$, set $P_{\mathcal A}=(X_{i_1},\ldots,X_{i_s})Q$.
Define $I=\bigcap_{v\in V}P_{\phi(v)}$, and set $R=Q/I$. 
It follows that $I$ is generated by square-free monomials in the variables $X_1,\ldots,X_n$, and 
the minimal primes of $R$ are exactly the ideals of the form $P_{\phi(v)}R$ with $v\in V$.
The fact that $R$ is equidimensional such that
$\Gamma_R$ is isomorphic to $G$ follows from a direct comparison of Fact~\ref{fact140103a} and Definition~\ref{labels}.
\qed
\end{para}

\begin{remark}  \label{rmk140104b}
The rings in Examples~\ref{ex140103b}, \ref{min=3}, and~\ref{cycle} are constructed as in the preceding proof.
In the first three of these examples, one can see the admissible labelings by inspecting the vertex ideals.
For instance, in Example~\ref{ex140103b} we have the following admissible labelings.

\begin{align*}
\begin{pgfpicture}{3cm}{-.5cm}{6cm}{0cm}
\pgfnodecircle{Node1}[fill]{\pgfxy(0,0)}{0.1cm}
\pgfnodecircle{Node2}[fill]{\pgfxy(2,0)}{0.1cm}
\pgfnodecircle{Node3}[fill]{\pgfxy(7, 0)}{0.1cm}
\pgfnodecircle{Node4}[fill]{\pgfxy(9, 0)}{0.1cm}
\pgfnodeconnline{Node1}{Node2}
\pgfputat{\pgfxy(-.15, -.3)}{\pgfbox[left,center]{$1$}}
\pgfputat{\pgfxy(2,-.3)}{\pgfbox[left,center]{$2$}}
\pgfputat{\pgfxy(6.7,-.3)}{\pgfbox[left,center]{$13$}}
\pgfputat{\pgfxy(8.95, -.3)}{\pgfbox[left,center]{$24$}}
\end{pgfpicture}
\end{align*}
Considering the first graph, our construction yields the ring $R=\mathsf k[\![X_1,X_2]\!]/I$ where
$I=(X_1)\bigcap(X_2)=(X_1X_2)$.
For the second graph, our construction yields the ring $R=\mathsf k[\![X_1,X_2,X_3,X_4]\!]/I$ where
$I=(X_1,X_3)\bigcap(X_2,X_4)=(X_1X_2,X_2X_3,X_3X_4,X_1X_4)$.
\end{remark} 

In preparation for the proof, the next few results give bounds on the numbers $n$, $d$, and $s$ from Definition~\ref{labels}. 

\begin{lemma}  \label{upperbound} 
Let $G$ be a  graph with an admissible labeling $\phi\colon V\into \binom{[n]}{s}$.
Let $v_1, \dots, v_m$ be vertices in $G$ such that the subgraph of $G$ induced by $v_1, \dots, v_m$ is connected.  
Then $\left|\phi(v_1) \bigcup \cdots \bigcup \phi(v_m)\right| \leq s + m - 1$.  
\end{lemma}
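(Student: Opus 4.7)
The plan is to proceed by induction on $m$, using a spanning tree of the induced subgraph to order the vertices so that each new vertex is adjacent to one already considered.

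For the base case $m=1$, we have $|\phi(v_1)| = s = s + 1 - 1$, as required. For the inductive step, since the subgraph $H$ of $G$ induced by $\{v_1,\ldots,v_m\}$ is connected, it has a spanning tree $T$. Performing a breadth-first (or depth-first) traversal of $T$ from any root yields an ordering of the vertices---which, after re-indexing, we may assume is $v_1,v_2,\ldots,v_m$---such that for each $k\geq 2$ there exists $j(k)<k$ with $v_k$ adjacent to $v_{j(k)}$ in $T$, hence in $H$, and hence in $G$.

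By the defining property of an admissible labeling (Definition~\ref{labels}(2)), the adjacency of $v_k$ and $v_{j(k)}$ in $G$ gives $|\phi(v_k)\cap\phi(v_{j(k)})|=s-1$, so the symmetric difference provides a single element $x_k\in\phi(v_k)\setminus\phi(v_{j(k)})$. Since $\phi(v_{j(k)})\subseteq\phi(v_1)\cup\cdots\cup\phi(v_{k-1})$, we obtain
\[
\phi(v_1)\cup\cdots\cup\phi(v_k)\;=\;\bigl(\phi(v_1)\cup\cdots\cup\phi(v_{k-1})\bigr)\cup\{x_k\},
\]
so passing from $k-1$ to $k$ enlarges the union by at most one element. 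Combining this with the base case and iterating yields
\[
\left|\phi(v_1)\cup\cdots\cup\phi(v_m)\right|\;\leq\; s+(m-1),
\]
as desired.

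The only subtle point is guaranteeing the existence of the enumeration in which every new vertex is attached to a previous one; this is immediate from the existence of a spanning tree of the connected induced subgraph (see the Remark following the definition of spanning tree in Section~\ref{sec140103f}), so no serious obstacle is anticipated.
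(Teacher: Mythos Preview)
Your proof is correct and follows essentially the same approach as the paper: induction on $m$, using a spanning tree of the induced subgraph to peel off a leaf adjacent to a previously handled vertex, and exploiting the fact that adjacent labels differ in exactly one element so the union grows by at most one. The only cosmetic difference is that the paper carries out the inductive step via inclusion--exclusion on $\bigl|\phi(v_1)\cup\cdots\cup\phi(v_m)\bigr|+|\phi(v_{m+1})|-\bigl|(\phi(v_1)\cup\cdots\cup\phi(v_m))\cap\phi(v_{m+1})\bigr|$, whereas you fix a global traversal ordering up front and iterate the ``add at most one element'' observation directly; both are the same idea.
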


\begin{proof} 
We argue by induction on $m$.  If $m = 1$, then $|\phi(v_1)| = s= s+1-1$, and the base case is established.  
Now assume the claim is true for lists of  $m$ vertices, and consider vertices $v_1, \dots, v_m, v_{m+1}$
such that the induced  subgraph $G'$ of $G$ is connected. 
Let $T$ be a spanning tree of $G'$. 
Since $T$ is a tree, we may re-order the vertices if necessary to assume  that the subgraph of $T$ induced by $v_1, \dots, v_m$ is also connected
and $v_{m+1}$ is adjacent to $v_1$ in $T$.
(For instance, let $v_{m+1}$ be a pendant vertex, i.e., a leaf, that is adjacent to $v_1$ in $T$.)
The inclusion-exclusion principle implies that
$$\left|\phi(v_1) \bigcup \cdots \bigcup \phi(v_{m+1})\right| = \left|\phi(v_1) \bigcup \cdots \bigcup \phi(v_m)\right| + |\phi(v_{m+1})| - \left|\left(\phi(v_1) \bigcup \cdots \bigcup \phi(v_m)\right) \bigcap \phi(v_{m+1})\right|.$$
By the induction hypothesis, the first term on the right-hand side of this equation is less than or equal to $s+m-1$.  Consider the third term: 
$$\left|\left(\phi(v_1) \bigcup \cdots \bigcup \phi(v_m)\right) \bigcap \phi(v_{m+1})\right| = \left|\left(\phi(v_1) \bigcap \phi(v_{m+1})\right) \bigcup \cdots \bigcup \left(\phi(v_m) \bigcap \phi(v_{m+1})\right)\right|.$$
Since $v_{m+1}$ is adjacent to $v_1$, we have $\left|\left(\phi(v_1) \bigcap \phi(v_{m+1})\right)\right|=s-1$,
and it follows that 
$$ \left|\left(\phi(v_1) \bigcap \phi(v_{m+1})\right) \bigcup \cdots \bigcup \left(\phi(v_m) \bigcap \phi(v_{m+1})\right)\right| \geq s-1.$$  
Therefore, we have
$$\left|\phi(v_1) \bigcup \cdots \bigcup \phi(v_{m+1})\right| \leq (s+m-1) + s - (s-1) = s + (m+1) - 1$$
as desired.  
\end{proof}

\begin{proposition}  \label{pupperbound} 
If $G$ is a connected graph with an admissible labeling $\phi\colon V\into \binom{[n]}{s}$,
then $n \leq s+d-1$.
\end{proposition}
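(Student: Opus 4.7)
The plan is to apply Lemma~\ref{upperbound} directly with $m = d$. Specifically, let $v_1, \ldots, v_d$ be an enumeration of the vertex set $V$ of $G$. Since $G$ is connected, the induced subgraph on $v_1, \ldots, v_d$ is just $G$ itself, which is connected by hypothesis. Thus Lemma~\ref{upperbound} yields
\[
\left|\phi(v_1) \bigcup \cdots \bigcup \phi(v_d)\right| \leq s + d - 1.
\]

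Next, I would invoke condition (1) of Definition~\ref{labels}, which asserts that $\phi(v_1) \bigcup \cdots \bigcup \phi(v_d) = [n]$. Since $|[n]| = n$, combining this equality with the inequality above immediately gives $n \leq s + d - 1$, as desired.

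There is essentially no obstacle here: the proof is a one-line application of the preceding lemma together with the surjectivity-on-indices condition built into the definition of an admissible labeling. The only thing to verify carefully is that the hypotheses of Lemma~\ref{upperbound} are met with the entire vertex list, which is immediate from the connectedness assumption on $G$.
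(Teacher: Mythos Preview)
Your proof is correct and matches the paper's own proof essentially verbatim: the paper also applies Lemma~\ref{upperbound} to the full vertex set $\{v_1,\ldots,v_d\}$ and then invokes condition~(1) of Definition~\ref{labels} to conclude $n \leq s+d-1$.
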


\begin{proof}
The vertex set $\{v_1,\ldots,v_d\}$ of $G$ satisfies the hypotheses of Lemma~\ref{upperbound}, with $\phi(v_1)\bigcup\cdots\bigcup\phi(v_d)=[n]$ by definition of admissible labeling..
\end{proof}

\begin{lemma} \label{lowerbound} 
Let $G$ be a  graph with an admissible labeling $\phi\colon V\into \binom{[n]}{s}$.
Let $v_1, \dots, v_m$ be vertices in $G$ such that the subgraph of $G$ induced by $v_1, \dots, v_m$ is connected.  
Then $\left|\phi(v_1) \bigcap \cdots \bigcap \phi(v_m)\right| \geq s - m + 1$.  
\end{lemma}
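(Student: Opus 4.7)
The plan is to mirror the inductive structure of Lemma~\ref{upperbound}, replacing the inclusion--exclusion step with a short complementary argument. We induct on $m$. The base case $m=1$ is immediate since $|\phi(v_1)|=s=s-1+1$.

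For the inductive step, assume the claim for collections of $m$ vertices and consider $v_1,\ldots,v_{m+1}$ whose induced subgraph $G'$ is connected. Just as in the proof of Lemma~\ref{upperbound}, pick a spanning tree $T$ of $G'$, choose $v_{m+1}$ to be a leaf of $T$, and re-order so that $v_{m+1}$ is adjacent (in $T$, hence in $G$) to $v_1$ and so that $v_1,\ldots,v_m$ still induce a connected subgraph. Set $A:=\phi(v_1)\bigcap\cdots\bigcap\phi(v_m)$; by the induction hypothesis, $|A|\geq s-m+1$.

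The key observation is that $A\subseteq \phi(v_1)$, while adjacency of $v_1$ and $v_{m+1}$ gives $|\phi(v_1)\bigcap\phi(v_{m+1})|=s-1$, equivalently $|\phi(v_1)\setminus\phi(v_{m+1})|=1$. Therefore
\[
A\setminus \phi(v_{m+1}) \;\subseteq\; \phi(v_1)\setminus \phi(v_{m+1}),
\]
which has cardinality at most $1$. Consequently
\[
\left|\phi(v_1)\bigcap\cdots\bigcap\phi(v_{m+1})\right| \;=\; |A\cap \phi(v_{m+1})| \;\geq\; |A|-1 \;\geq\; (s-m+1)-1 \;=\; s-(m+1)+1,
\]
completing the induction.

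There is no real obstacle here: the only subtlety, identical to that in Lemma~\ref{upperbound}, is the use of a spanning tree to justify re-ordering the vertices so that removing $v_{m+1}$ leaves a connected induced subgraph and $v_{m+1}$ is adjacent to $v_1$. Once that is in place, the bound follows by comparing $A$ to $\phi(v_1)$ and using that $\phi(v_1)$ and $\phi(v_{m+1})$ differ in exactly one element.
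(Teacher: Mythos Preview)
Your proof is correct and follows the same inductive scaffold as the paper's: the base case, the spanning-tree re-ordering so that $v_{m+1}$ is a leaf adjacent to $v_1$, and the use of the induction hypothesis on $A:=\phi(v_1)\cap\cdots\cap\phi(v_m)$ are all identical.

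The only difference is in the final count. The paper applies inclusion--exclusion to $\bigl(\phi(v_1)\cap\cdots\cap\phi(v_m)\bigr)\cup\phi(v_{m+1})$, rewrites this union via the distributive law as $\bigcap_i\bigl(\phi(v_i)\cup\phi(v_{m+1})\bigr)$, and bounds it above by $|\phi(v_1)\cup\phi(v_{m+1})|=s+1$ to extract the inequality. You instead observe directly that $A\subseteq\phi(v_1)$ and $|\phi(v_1)\setminus\phi(v_{m+1})|=1$, so $|A\setminus\phi(v_{m+1})|\leq 1$. Your route is a bit shorter and avoids the distributive-law detour; the paper's version has the virtue of being formally dual to its proof of Lemma~\ref{upperbound}. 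Either way, the essential idea---intersecting with $\phi(v_{m+1})$ can remove at most one element because $\phi(v_1)$ and $\phi(v_{m+1})$ differ in exactly one element---is the same.
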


\begin{proof} 
We argue by induction on $m$.  If $m = 1$, then $|\phi(v_1)| = s= s-1+1$, and the base case is established.  
Now assume the claim is true for lists of  $m$ vertices, and consider vertices $v_1, \dots, v_m, v_{m+1}$
such that the induced  subgraph $G'$ of $G$ is connected. 
Let $T$ be a spanning tree of $G'$. 
Since $T$ is a tree, we may re-order the vertices if necessary to assume  that the subgraph of $T$ induced by $v_1, \dots, v_m$ is also connected
and $v_{m+1}$ is adjacent to $v_1$ in $T$.
The inclusion-exclusion principle yields
\begin{multline} \label{eq140107a} 
\left|\left(\phi(v_1) \bigcap \cdots \bigcap \phi(v_m)\right) \bigcup \phi(v_{m+1})\right| \\
=  \left|\phi(v_1) \bigcap \cdots \bigcap \phi(v_m)\right| + |\phi(v_{m+1})| - \left|\left(\phi(v_1) \bigcap \cdots \bigcap \phi(v_m)\right) \bigcap \phi(v_{m+1})\right|.
\end{multline}
By the induction hypothesis, the first term of the right-hand side of this equation is greater than or equal to $s-m+1$.  Consider the left-hand side, rewritten as:
$$\left|\left(\phi(v_1) \bigcup \phi(v_{m+1})\right) \bigcap \cdots \bigcap \left(\phi(v_m) \bigcup \phi(v_{m+1})\right)\right|\leq \left|\phi(v_1) \bigcup \phi(v_{m+1})\right|=s+1.$$
Therefore,  equation~\eqref{eq140107a} implies that
$$s+1 \geq \left|\left(\phi(v_1) \bigcap \cdots \bigcap \phi(v_m)\right) \bigcup \phi(v_{m+1})\right| \geq (s-m+1) + s - \left|\phi(v_1) \bigcap \cdots \bigcap \phi(v_m) \bigcap \phi(v_{m+1})\right|$$
from which it follows that 
$\displaystyle{\left|\phi(v_1) \bigcap \cdots \bigcap \phi(v_m) \bigcap \phi(v_{m+1})\right| \geq s - (m+1) + 1}.$
\end{proof}

\begin{proposition}  \label{plowerbound} 
If $G$ is a connected graph with an admissible labeling $\phi\colon V\into \binom{[n]}{s}$,
then  $$\left|\phi(v_1) \bigcap \cdots \bigcap \phi(v_d)\right| \geq s-d+1.$$
\end{proposition}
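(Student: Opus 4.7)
The plan is to obtain this as an immediate corollary of Lemma~\ref{lowerbound}, exactly in the same way that Proposition~\ref{pupperbound} was obtained from Lemma~\ref{upperbound}. Indeed, let $V=\{v_1,\ldots,v_d\}$ be the full vertex set of $G$. Since $G$ itself is connected by hypothesis, the subgraph of $G$ induced by $v_1,\ldots,v_d$ is precisely $G$, which is connected. Hence the list $v_1,\ldots,v_d$ satisfies the hypotheses of Lemma~\ref{lowerbound} with $m=d$.

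Applying Lemma~\ref{lowerbound} directly gives
\[
\left|\phi(v_1)\bigcap\cdots\bigcap\phi(v_d)\right|\geq s-d+1,
\]
which is the desired conclusion. No additional argument is needed: the inductive work, the choice of a spanning tree, and the inclusion-exclusion computation have all already been carried out in the proof of Lemma~\ref{lowerbound}.

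In short, there is no real obstacle to overcome here; the only content is the observation that the connectedness of $G$ is exactly what permits the lemma to be applied to the entire vertex set. I would present the proof in a single sentence, mirroring the brevity of the proof of Proposition~\ref{pupperbound}.
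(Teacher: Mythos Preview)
Your proposal is correct and takes essentially the same approach as the paper: the paper's proof consists of the single sentence ``The vertex set $\{v_1,\ldots,v_d\}$ of $G$ satisfies the hypotheses of Lemma~\ref{lowerbound},'' which is exactly the observation you make.
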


\begin{proof}
The vertex set $\{v_1,\ldots,v_d\}$ of $G$ satisfies the hypotheses of Lemma~\ref{lowerbound}.
\end{proof}

\begin{corollary}\label{cor140103a}
Let $G$ be a connected graph with an admissible labeling $\phi\colon V\into \binom{[n]}{s}$.
If $d\geq 2$, then $G$ has a second admissible labeling $\phi'\colon V\into \binom{[n-s+d-1]}{d-1}$.
\end{corollary}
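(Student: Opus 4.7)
The plan is to obtain $\phi'$ from $\phi$ by a \emph{reduce-then-pad} operation. The key observation is that the target parameters satisfy $(n-s+d-1)-(d-1)=n-s$, matching the ``co-label size'' $n-s$ of $\phi$; moreover, adding or removing an element that belongs to every label preserves this co-label size while shifting both the label size and the ambient size by the same amount. So the strategy is to strip out the common intersection of $\phi$ (which may be forced to be large) and then re-pad with fresh common elements to land at label size exactly $d-1$.

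Concretely, let $W=\bigcap_{v\in V}\phi(v)$ and set $k=|W|$, so that Proposition~\ref{plowerbound} gives $k\geq s-d+1$. Choose a set $C$ of $c:=d-1-s+k$ fresh symbols disjoint from $[n]$; this count is nonnegative exactly because of the preceding inequality. Define
$$\phi'(v) \;=\; \bigl(\phi(v)\setminus W\bigr)\cup C.$$
Each $\phi'(v)$ has size $(s-k)+c=d-1$, and every label lies in the set $([n]\setminus W)\cup C$, which has cardinality $(n-k)+c=n-s+d-1$. After identifying this ambient set with $[n-s+d-1]$, we obtain $\phi'\colon V\to\binom{[n-s+d-1]}{d-1}$.

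What remains is verifying the three conditions of Definition~\ref{labels}. Injectivity follows since $\phi(v)\supseteq W$ for every $v$, so $\phi(v)\mapsto\phi(v)\setminus W$ is a bijection on values and appending the fixed set $C$ preserves distinctness. Condition (1) holds because $\bigcup_v\phi'(v)=([n]\setminus W)\cup C$ is the full ambient set, using $\bigcup_v\phi(v)=[n]$. For condition (2), the computation
$$|\phi'(v)\cap\phi'(w)| \;=\; |\phi(v)\cap\phi(w)|+(d-1-s) \qquad (v\neq w),$$
which follows because $C$ lies in both $\phi'$ values while the $W$--part of $\phi(v)\cap\phi(w)$ is stripped away, converts the adjacency criterion ``$|\phi(v)\cap\phi(w)|=s-1$'' for $\phi$ into ``$|\phi'(v)\cap\phi'(w)|=d-2$'' for $\phi'$, and converts ``$|\phi(v)\cap\phi(w)|\leq s-2$'' (which forces non-adjacency for distinct vertices, by injectivity of $\phi$ together with admissibility) into ``$|\phi'(v)\cap\phi'(w)|\leq d-3<d-2$''. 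The only potential obstacle is making sure $c\geq 0$, which is precisely what Proposition~\ref{plowerbound} supplies; once that bound is in hand, the arithmetic forces every condition to match up automatically.
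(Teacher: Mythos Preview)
Your proof is correct and uses essentially the same idea as the paper's: adjust the common part of the labels (invoking Proposition~\ref{plowerbound} to guarantee there is enough room) so that the label size becomes $d-1$ while the ambient size drops by $s-(d-1)$. The paper splits into cases---removing $s-d+1$ common elements when $s\geq d$, adding $d-1-s$ fresh common elements when $s<d-1$, and doing nothing when $s=d-1$---whereas your reduce-then-pad construction strips the entire common intersection and then re-pads, which handles all three cases uniformly.
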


\begin{proof}
If $s=d-1$, then there is nothing to prove, so assume that $s\neq d-1$.  In the case where $s\geq d\geq 2$, Proposition~\ref{plowerbound} implies that $$\left|\phi(v_1) \bigcap \cdots \bigcap \phi(v_d)\right|\geq s-d+1\geq 1.$$
Re-order the set $[n]$ if necessary to assume that we have $n-s+d,\ldots,n\in\phi(v_1) \bigcap \cdots \bigcap \phi(v_d)$,
and define $\phi'\colon V\into \binom{[n-s+d-1]}{d-1}$ as $\phi'(v):=\phi(v)-\{n-s+d,\ldots,n\}$. Since $\phi$ is an admissible labeling of $G$, it is straightforward
to show that $\phi'$ is also. 

In the case $s < d-1$,  define $\phi'\colon V\into \binom{[n-s+d-1]}{d-1}$ as $\phi'(v):=\phi(v)\bigcup\{n+1,\ldots,n-s+d\}$. 
\end{proof}

Our next result shows that the bounds from Propositions \ref{pupperbound} and \ref{plowerbound} are sharp; see graph (9) in Remark \ref{rmk140107a} below. We repeatedly make use of the fact that one can re-order (i.e., permute) the elements of $[n]$ using an element of the symmetric group $S_n$ to a given admissible labeling. This allows us to put some labels into specific forms (e.g., $\phi(v_d)=\{1,2,\ldots,s\}$) to make for easier bookkeeping.


\begin{proposition}  \label{sharpbound} 
Let $G$ be the star graph on $d \geq 2$ vertices, i.e., the complete bipartite graph $K_{1,d-1}$. 
Then $G$ has an admissible labeling $\phi\colon V\to\binom{[2(d-1)]}{d-1}$
such that $\phi(v_1)\bigcap\cdots\bigcap\phi(v_d)=\emptyset$. Furthermore, any admissible labeling $\psi\colon V\into\binom{[n]}{s}$ of $G$ 
has $s\geq d-1$ and $n= s+d-1 \geq 2(d-1)$ and $\left|\phi(v_1)\bigcap\cdots\bigcap\phi(v_d)\right|=s-d+1$.
\end{proposition}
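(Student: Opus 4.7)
The plan is to handle the existence claim first, then analyze the structure of an arbitrary admissible labeling of $G$ to force the numerical constraints, and finally read off the intersection formula as a byproduct. Throughout, let the vertices be $v_1,\ldots,v_d$ with $v_d$ the center and $v_1,\ldots,v_{d-1}$ the leaves of $K_{1,d-1}$.

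For the existence half, I would simply write down the labeling
$$\phi(v_d) = \{1,2,\ldots,d-1\}, \qquad \phi(v_i) = \bigl(\{1,\ldots,d-1\}\setminus\{i\}\bigr)\cup\{d-1+i\} \quad (1\le i\le d-1),$$
landing in $\binom{[2(d-1)]}{d-1}$. A quick check: $|\phi(v_i)\cap\phi(v_d)|=d-2=s-1$, so every leaf is adjacent to the center; and for leaves $i\neq j$, $\phi(v_i)\cap\phi(v_j)=\{1,\ldots,d-1\}\setminus\{i,j\}$ has size $d-3\neq s-1$, so leaves are pairwise non-adjacent. The union is all of $[2(d-1)]$ and the total intersection $\bigcap_{i=1}^d \phi(v_i)$ is empty because each index $j\in\{1,\ldots,d-1\}$ fails to lie in $\phi(v_j)$.

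For the lower bounds, let $\psi\colon V\hookrightarrow\binom{[n]}{s}$ be any admissible labeling of $G$. After permuting $[n]$ I may assume $\psi(v_d)=\{1,\ldots,s\}$. Since each leaf is adjacent to $v_d$ and to no other leaf, each $\psi(v_i)$ agrees with $\psi(v_d)$ in exactly $s-1$ places, so I can write
$$\psi(v_i) = \bigl(\{1,\ldots,s\}\setminus\{a_i\}\bigr)\cup\{b_i\}, \qquad a_i\in\{1,\ldots,s\},\ b_i\in\{s+1,\ldots,n\}.$$
A direct computation gives $|\psi(v_i)\cap\psi(v_j)|=s-1$ whenever $a_i=a_j$ or $b_i=b_j$ (with $i\neq j$ among leaves), and $s-2$ otherwise. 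Injectivity rules out $(a_i,b_i)=(a_j,b_j)$, and the non-adjacency of leaves forces $a_i\neq a_j$ and $b_i\neq b_j$ for all $i\neq j$. Hence the $d-1$ elements $a_1,\ldots,a_{d-1}$ are distinct in $\{1,\ldots,s\}$, giving $s\geq d-1$; and $b_1,\ldots,b_{d-1}$ are distinct in $\{s+1,\ldots,n\}$, giving $n\geq s+d-1$. Combined with $n\leq s+d-1$ from Proposition~\ref{pupperbound} applied to the connected graph $G$, this yields $n=s+d-1\geq 2(d-1)$.

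Finally, the intersection assertion drops out of the same description: since each $b_i\notin\psi(v_d)$, we have $\bigcap_{i=1}^d\psi(v_i)\subseteq\{1,\ldots,s\}$, and for $k\in\{1,\ldots,s\}$ we see $k\in\psi(v_i)$ iff $k\neq a_i$. Thus
$$\bigcap_{i=1}^d\psi(v_i) = \{1,\ldots,s\}\setminus\{a_1,\ldots,a_{d-1}\},$$
which has cardinality $s-(d-1)=s-d+1$ because the $a_i$ are distinct. The main bookkeeping obstacle is the case analysis in the middle paragraph, where one has to rule out all ways in which two leaves could accidentally share $s-1$ elements; everything else is essentially counting.
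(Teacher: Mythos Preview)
Your proof is correct and follows essentially the same approach as the paper: the same explicit construction for existence, the same normalization $\psi(v_d)=\{1,\ldots,s\}$, and the same key observation that each leaf label has the form $(\{1,\ldots,s\}\setminus\{a_i\})\cup\{b_i\}$ with the $a_i$ pairwise distinct and the $b_i$ pairwise distinct. The only cosmetic difference is that the paper continues re-ordering to force $a_i=i$ and $b_i=s+i$ and then reads off $n=s+d-1$ directly from $\bigcup_p\psi(v_p)=[s+d-1]$, whereas you keep the $a_i,b_i$ abstract, get $n\geq s+d-1$ by counting the distinct $b_i$, and close with the upper bound from Proposition~\ref{pupperbound}; both routes are equally valid.
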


\begin{proof}  
By definition, $G$ has a vertex $v_d$ with degree $d-1$ and all other vertices $v_1,\ldots,v_{d-1}$ have degree 1.
(Note that $v_d$ is uniquely determined unless $d=2$.)

Define $\phi\colon V\to\binom{[2(d-1)]}{d-1}$ as follows:
$\phi(v_d)=\{1,\ldots,d-1\}$ and $\phi(v_i)=\{1,\ldots,d-1\}-\{i\}\bigcup\{d-1+i\}$ for $i=1,\ldots,d-1$.
For example, in the case $d > 4$, we have $\phi(v_1)=\{2,3,\ldots,d\}$,
$\phi(v_2)=\{1,3,\ldots,d-1,d+1\}$,
and $\phi(v_3)=\{1,2,4,\ldots,d-1,d+2\}$.
It is straightforward to verify that for $i<j<d$ we have
$\phi(v_i)\bigcap\phi(v_d)=\{1,\ldots,d-1\}-\{i\}$
and
$\phi(v_i)\bigcap\phi(v_j)=\{1,\ldots,d-1\}-\{i,j\}$.
Moreover, we have $\phi(v_1)\bigcup\cdots\bigcup\phi(v_d)=\{1,\ldots,2(d-1)\}=[2(d-1)]$,
so $\phi$ is an admissible labeling of $G$.  From the explicit description of $\phi$, it is straightforward to show that $\phi(v_1)\bigcap\cdots\bigcap\phi(v_d)=\emptyset$.

Now, suppose that $\psi\colon V\into\binom{[n]}{s}$ is an admissible labeling. 

Claim: The elements of $[n]$ can be re-ordered so that we have $\psi(v_i)=(\psi(v_d)-\{i\})\bigcup\{s+i\}$ for $i=1,\ldots,d-1$.  To prove this, start by re-ordering the elements of $[n]$ to assume that $\psi(v_d)=\{1,\ldots,s\}$.
Consider the edge $v_1v_d$. Since $\left|\psi(v_1)\bigcap\psi(v_d)\right|=s-1$, we have
$\psi(v_1)=(\psi(v_d)-\{a\})\bigcup\{b\}$ for some $a\in[s]$ and some $b\in[n]-[s]$.
Thus, we can re-order the elements of $[n]$
to assume that $\psi(v_1)=(\psi(v_d)-\{1\})\bigcup\{s+1\}=\{2,\ldots,s,s+1\}$.
Next, consider the edge $v_2v_d$. 
As with the previous edge, we have $\psi(v_2)=(\psi(v_d)-\{p\})\bigcup\{q\}$ for some $p\in[s]$ and some $q\in[n]-[s]$.
If $p=1$, then we have $2,\ldots,s\in\psi(v_1)\bigcap\psi(v_2)$; however, $v_1$ is not adjacent to $v_2$, so we must have
$\left|\psi(v_1)\bigcap\psi(v_2)\right|\leq s-2$, a contradiction. 
It follows that we must have $2\leq p\leq s$ so we can re-order  the set $\{2,\ldots,s\}$ to assume that $p=2$.
Similarly, we must have $q>s+1$, so we can re-order the set $\{s+2,\ldots,n\}$ to assume that $q=s+2$.
Continue in this way for the edges $v_iv_d$ with $i=3,\ldots,d-1$ to complete the proof of the claim.

From the claim, we must have $1,\ldots,d-1\in [s]$. It follows that $s\geq d-1$, establishing the first conclusion of our result.
For the second conclusion, note that the sets $\psi(v_d),\psi(v_1),\psi(v_2),\ldots,\psi(v_{d-1})$ are
$\{1,\ldots,s\}$, $\{2,\ldots,s,s+1\}$, $\{1,3,\ldots,s,s+2\}$, \ldots, $\{1,2,\ldots,d-2,d,\ldots,s,s+d-1\}$.
From this description, we see that the largest integer occurring in any set $\psi(v_p)$ is $s+d-1$.
Since $\bigcup_p\psi(v_p)=[n]$, it follows that the largest number $n$ in this set is $s+d-1$.
For the final conclusion, use the preceding description to observe that
$\phi(v_1)\bigcap\cdots\bigcap\phi(v_d)=\{d,\ldots,s\}$, which has cardinality $s-d+1$, as desired.
\end{proof}

Next, we present a graph without an admissible labeling; see also Remark~\ref{rmk140107a}.

\begin{proposition} \label{prop140103a}
The graph $G$ below does not have an admissible labeling.

\begin{align*}
\begin{pgfpicture}{0cm}{-1.5cm}{2cm}{1cm}
\pgfnodecircle{Node1}[fill]{\pgfxy(0,0)}{0.1cm}
\pgfnodecircle{Node2}[fill]{\pgfxy(1,0)}{0.1cm}
\pgfnodecircle{Node3}[fill]{\pgfxy(1, 1)}{0.1cm}
\pgfnodecircle{Node4}[fill]{\pgfxy(0, 1)}{0.1cm}
\pgfnodecircle{Node5}[fill]{\pgfxy(2,-1)}{0.1cm}
\pgfnodeconnline{Node1}{Node2}
\pgfnodeconnline{Node2}{Node4}
\pgfnodeconnline{Node1}{Node4}
\pgfnodeconnline{Node2}{Node3}
\pgfnodeconnline{Node3}{Node4}
\pgfnodeconnline{Node1}{Node5}
\pgfnodeconnline{Node3}{Node5}
\end{pgfpicture}
\end{align*}
\end{proposition}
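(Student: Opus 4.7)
The plan is a proof by contradiction: assume there is an admissible labeling $\phi\colon V\into\binom{[n]}{s}$. Label the vertices of $G$ as $v_1,\ldots,v_5$ following the figure, so that the induced subgraph on $\{v_1,v_2,v_3,v_4\}$ is $K_4$ with the edge $v_1v_3$ deleted, and $v_5$ is adjacent to exactly $v_1$ and $v_3$. The degenerate cases $s=1$ and $n\leq s+1$ are disposed of directly (in either situation $\binom{[n]}{s}$ with the admissibility adjacency is a complete graph, whereas $G$ is not), so I assume $s\geq 2$ and $n\geq s+2$. After permuting elements of $[n]$, I may normalize $\phi(v_2)=\{1,\ldots,s\}$, and using the edge $v_2v_4$, further normalize $\phi(v_4)=\{1,\ldots,s-1,s+1\}$.

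Next, using the edges $v_1v_2$ and $v_1v_4$, an elementary count based on whether $s$ and $s+1$ lie in $\phi(v_1)$ forces $\phi(v_1)$ into one of two forms: either (A) $\phi(v_1)=(\{1,\ldots,s-1\}\setminus\{j\})\cup\{s,s+1\}$ for some $j\in[s-1]$, or (B) $\phi(v_1)=\{1,\ldots,s-1\}\cup\{a\}$ for some $a\geq s+2$. The same dichotomy holds for $\phi(v_3)$. A direct intersection computation shows that two distinct labels both of form (A), or both of form (B), intersect in $s-1$ elements, violating the non-edge $v_1v_3$. Using the automorphism of $G$ swapping $v_1$ and $v_3$, I may therefore assume $\phi(v_1)$ has form (A) with parameter $j\in[s-1]$ and $\phi(v_3)$ has form (B) with parameter $a\geq s+2$.

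The final step is to rule out every possibility for $\phi(v_5)$. The edge $v_5v_1$ gives $\phi(v_5)=(\phi(v_1)\setminus\{u\})\cup\{w\}$ for some $u\in\phi(v_1)$ and $w\notin\phi(v_1)$. Combining this with the edge $v_5v_3$ and the identity $\phi(v_1)\triangle\phi(v_3)=\{s,s+1,j,a\}$ narrows the choices to $u\in\{s,s+1\}$ and $w\in\{j,a\}$, leaving exactly four candidates for $\phi(v_5)$. The two with $w=j$ turn out to equal $\phi(v_4)$ and $\phi(v_2)$ respectively, violating injectivity. The candidate $(u,w)=(s,a)$ satisfies $|\phi(v_5)\cap\phi(v_4)|=s-1$, contradicting the non-edge $v_4v_5$, and the candidate $(u,w)=(s+1,a)$ satisfies $|\phi(v_5)\cap\phi(v_2)|=s-1$, contradicting $v_2v_5$. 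All four cases collapse, so no admissible labeling exists.

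I expect the main obstacle to be making the normalization rigid enough that the analysis terminates in a finite, tractable list of cases. The key is to exploit the adjacencies around $v_2$ and $v_4$ to pin down $\phi(v_1)$ and $\phi(v_3)$ up to the two integer parameters $j$ and $a$, after which the analysis of $\phi(v_5)$ is purely mechanical. No individual computation is hard, but failing to exhaust all four $\phi(v_5)$ candidates, or missing the symmetric alternative in which $\phi(v_1)$ has form (B) and $\phi(v_3)$ has form (A), would leave gaps; the $v_1\leftrightarrow v_3$ automorphism of $G$ is what legitimately removes the need to treat that alternative separately.
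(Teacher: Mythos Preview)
Your argument is correct, and it takes a genuinely different route from the paper's proof. The paper first invokes Corollary~\ref{cor140103a} to reduce to $s\le 4$ (and Lemma~\ref{lem140103b}(2) for $s\ge 2$), then further normalizes so that $\phi(A)\cap\cdots\cap\phi(E)=\emptyset$, and finally carries out an explicit case analysis on $s\in\{2,3,4\}$, branching on the possible labels of $C$ and $D$ in each case (several sub-cases are left to the reader). Your proof, by contrast, works uniformly in $s$: after normalizing $\phi(v_2)$ and $\phi(v_4)$, you classify every common neighbour of $v_2$ and $v_4$ into the two structural types (A) and (B), use the non-edge $v_1v_3$ together with the $v_1\leftrightarrow v_3$ automorphism to pin down $\phi(v_1)$ and $\phi(v_3)$ up to two parameters, and then dispatch exactly four candidates for $\phi(v_5)$. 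This is cleaner, requires neither Corollary~\ref{cor140103a} nor the empty-intersection normalization, and leaves no sub-cases implicit. The paper's approach has the advantage of being entirely concrete (one can literally write down every label), but yours isolates the combinatorial obstruction more transparently: the point is that $v_1$ and $v_3$ must sit on opposite ``sides'' of the edge $\phi(v_2)\phi(v_4)$ in the Johnson-type adjacency, and then $v_5$ has nowhere to go. One small remark: for $s=2$ there is only one label of type~(A), so the ``both of type (A)'' case is ruled out by injectivity rather than by the intersection count; this is implicit in your phrasing but worth a word if you write it out in full.
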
  

\begin{proof}
We name the vertices of $G$ as $A,\ldots,E$ as follows.  Note that these are not labels for the vertices (as from an admissible labeling).

\begin{align*}
\begin{pgfpicture}{0cm}{-1.5cm}{2cm}{1cm}
\pgfnodecircle{Node1}[fill]{\pgfxy(0,0)}{0.1cm}
\pgfnodecircle{Node2}[fill]{\pgfxy(1,0)}{0.1cm}
\pgfnodecircle{Node3}[fill]{\pgfxy(1, 1)}{0.1cm}
\pgfnodecircle{Node4}[fill]{\pgfxy(0, 1)}{0.1cm}
\pgfnodecircle{Node5}[fill]{\pgfxy(2,-1)}{0.1cm}
\pgfnodeconnline{Node1}{Node2}
\pgfnodeconnline{Node2}{Node4}
\pgfnodeconnline{Node1}{Node4}
\pgfnodeconnline{Node2}{Node3}
\pgfnodeconnline{Node3}{Node4}
\pgfnodeconnline{Node1}{Node5}
\pgfnodeconnline{Node3}{Node5}
\pgfputat{\pgfxy(-.3, -.25)}{\pgfbox[left,center]{D}}
\pgfputat{\pgfxy(1.05,-.25)}{\pgfbox[left,center]{C}}
\pgfputat{\pgfxy(1.05, 1.3)}{\pgfbox[left,center]{B}}
\pgfputat{\pgfxy(-.3, 1.3)}{\pgfbox[left,center]{A}}
\pgfputat{\pgfxy(2.1, -1.2)}{\pgfbox[left,center]{E}}
\end{pgfpicture}
\end{align*}

Suppose by way of contradiction that the given graph $G$ has an admissible labeling $\phi\colon V\into \binom{[n]}{s}$.
Since we have $d=5$, Corollary~\ref{cor140103a} implies that we may assume that $s\leq 4$.
Since $G$ is not complete, Lemma~\ref{lem140103b} implies that $s\geq 2$. 
As in the proof of Corollary~\ref{cor140103a}, we may assume without loss of generality that
$\phi(A)\bigcap\cdots\bigcap\phi(E)=\emptyset$.

Case 1: $s=2$.  
Re-order the elements of $[n]$ to assume that $\phi(A)=\{1,2\}$. 
By definition we have $\left|\phi(A)\bigcap\phi(B)\right|=1$,
so we re-order the elements of $[n]$ to assume that $\phi(B)=\{1,3\}$. 
Since $\left|\phi(A)\bigcap\phi(C)\right|=1=\left|\phi(B)\bigcap\phi(C)\right|$, we 
re-order again to assume that either $\phi(C) = \{1, 4\}$ or $\phi(C) = \{2,3\}$, depending on whether $1\in\phi(C)$ or $1\notin\phi(C)$.

Sub-case 1a: $\phi(C) = \{1, 4\}$ 
In this case, we must have $D=\{2,4\}$. (Indeed, since $B$ and $D$ are not adjacent, we  have $1\notin\phi(D)$. Since $A$ and $D$ are adjacent,
we therefore must have $2\in\phi(D)$. And since $C$ and $D$ are adjacent,
we  must have $4\in\phi(D)$.)
Since $A$ and $C$ are not adjacent to $E$, we must have $1,2,3\notin\phi(E)$, but this implies that $\phi(D)\bigcap\phi(E)=\emptyset$, contradicting the fact that
$D$ and $E$ are adjacent.

Sub-case 1b: $\phi(C) = \{2,3\}$
In this sub-case, as in the previous one, we  have $D=\{2,4\}$ or $D=\{2,5\}$ after re-ordering, and a contradiction is arrived in a similar manner. 

Case 2:  $s=3$.  As in Case 1, re-order the elements of $[n]$ to assume that $\phi(A) =\{1, 2, 3\}$ and $\phi(B) = \{1, 2, 4\}$
and either $\phi(C) = \{1, 2, 5\}$,   $\phi(C) = \{1, 3, 4\}$, or $\phi(C) = \{2, 3, 4\}$. 
Suppose that $\phi(C) = \{1, 2, 5\}$. As above, re-order the elements of $[n]$ to assume that 
$\phi(D)=\{1, 3, 5\}$ or $\phi(D)=\{2, 3, 5\}$. 
Suppose that $\phi(D)=\{1, 3, 5\}$. It follows that
$\phi(A)\bigcap\cdots\bigcap\phi(D)=\{1\}$.
By assumption, we have $\phi(A)\bigcap\cdots\bigcap\phi(E)=\emptyset$, so we conclude that $1\notin\phi(E)$.
Using the edges $BE$ and $DE$, we conclude that $2,4,3,5\in\phi(E)$, contradicting the assumption $|\phi(E)|=s=3$.
The remaining sub-cases (as depicted below) are handled similarly.
\begin{align*}
\begin{pgfpicture}{5cm}{-1cm}{1cm}{2cm}
\pgfnodecircle{Node1}[fill]{\pgfxy(0,0)}{0.1cm}
\pgfnodecircle{Node2}[fill]{\pgfxy(1,0)}{0.1cm}
\pgfnodecircle{Node3}[fill]{\pgfxy(1, 1)}{0.1cm}
\pgfnodecircle{Node4}[fill]{\pgfxy(0, 1)}{0.1cm}
\pgfnodecircle{Node5}[fill]{\pgfxy(8,0)}{0.1cm}
\pgfnodecircle{Node6}[fill]{\pgfxy(9,0)}{0.1cm}
\pgfnodecircle{Node7}[fill]{\pgfxy(9, 1)}{0.1cm}
\pgfnodecircle{Node8}[fill]{\pgfxy(8, 1)}{0.1cm}
\pgfnodecircle{Node9}[fill]{\pgfxy(4,0)}{0.1cm}
\pgfnodecircle{Node10}[fill]{\pgfxy(5,0)}{0.1cm}
\pgfnodecircle{Node11}[fill]{\pgfxy(5, 1)}{0.1cm}
\pgfnodecircle{Node12}[fill]{\pgfxy(4, 1)}{0.1cm}
\pgfnodeconnline{Node1}{Node2}
\pgfnodeconnline{Node2}{Node4}
\pgfnodeconnline{Node1}{Node4}
\pgfnodeconnline{Node2}{Node3}
\pgfnodeconnline{Node3}{Node4}
\pgfnodeconnline{Node5}{Node6}
\pgfnodeconnline{Node6}{Node8}
\pgfnodeconnline{Node5}{Node8}
\pgfnodeconnline{Node6}{Node7}
\pgfnodeconnline{Node7}{Node8}
\pgfnodeconnline{Node9}{Node12}
\pgfnodeconnline{Node10}{Node11}
\pgfnodeconnline{Node10}{Node12}
\pgfnodeconnline{Node9}{Node10}
\pgfnodeconnline{Node11}{Node12}
\pgfputat{\pgfxy(-1, -.3)}{\pgfbox[left,center]{$135/235$}}
\pgfputat{\pgfxy(.98,-.3)}{\pgfbox[left,center]{$125$}}
\pgfputat{\pgfxy(.98, 1.3)}{\pgfbox[left,center]{$124$}}
\pgfputat{\pgfxy(-.5, 1.3)}{\pgfbox[left,center]{$123$}}
\pgfputat{\pgfxy(7.45, -.3)}{\pgfbox[left,center]{$235$}}
\pgfputat{\pgfxy(9,-.3)}{\pgfbox[left,center]{$234$}}
\pgfputat{\pgfxy(9, 1.3)}{\pgfbox[left,center]{$124$}}
\pgfputat{\pgfxy(7.45, 1.3)}{\pgfbox[left,center]{$123$}}
\pgfputat{\pgfxy(3.45, -.3)}{\pgfbox[left,center]{$135$}}
\pgfputat{\pgfxy(5,-.3)}{\pgfbox[left,center]{$134$}}
\pgfputat{\pgfxy(5, 1.3)}{\pgfbox[left,center]{$124$}}
\pgfputat{\pgfxy(3.45, 1.3)}{\pgfbox[left,center]{$123$}}
\end{pgfpicture}
\end{align*}
Case 3: $s=4$.  
As above, we assume that $\phi(A) =\{1, 2, 3, 4\}$ and $\phi(B) = \{1, 2, 3, 5\}$.
After re-ordering, it follows that $\phi(C) = \{1, 2, 3, 6\}$ or $\phi(C) = \{n,m, 4, 5\}$, where $n, m \in \{1, 2, 3\}$.  
Suppose that $\phi(C) = \{1, 2, 3, 6\}$. Again after re-ordering, we must have $\phi(D)=\{1, 2, 4, 6\}$, $\phi(D)=\{1, 3, 4, 6\}$, or $\phi(D)=\{2, 3, 4, 6\}$.
Suppose that $\phi(D)=\{1, 2, 4, 6\}$. It follows that $\phi(A) \bigcap\cdots \bigcap \phi(D) = \{1,2\}$.  Since we 
have $\phi(A) \bigcap \cdots\bigcap \phi(E)=\emptyset$, we must have $1,2\notin\phi(E)$. But this implies that $\phi(B)\bigcap\phi(E)\subseteq\{4,6\}$,
hence $3=\left|\phi(B)\bigcap\phi(E)\right|\leq 2$, a contradiction. The remaining sub-cases are handled similarly.
\end{proof}

Here is a list of some classes of graphs that have admissible labelings.

\begin{proposition}  \label{prop140103ax}
The following graphs have admissible labelings:
\begin{enumerate}
\item Any path $P_d$;
\item Any cycle $C_d$;
\item Any complete graph $K_d$;
\item Any graph which is totally disconnected; i.e., just a set of discrete points;
\item Any star graph.
\end{enumerate}
\end{proposition}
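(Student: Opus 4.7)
The plan is to exhibit an explicit admissible labeling for each of the five classes; cases (3) and (5) are already settled in the paper. Case (3) is the content of Lemma~\ref{lem140103b}(2), and case (5) is the content of Proposition~\ref{sharpbound} (with the trivial case $d = 1$ of a single vertex falling under case (4)). So the real work is in cases (1), (2), and (4).

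For case (4), the totally disconnected graph on vertices $v_1, \ldots, v_d$, I would take $n = 2d$ and $s = 2$, and set $\phi(v_i) = \{2i-1, 2i\}$. The sets are pairwise disjoint, so $|\phi(v_i) \bigcap \phi(v_j)| = 0 \neq 1 = s-1$ for all $i \neq j$, matching the absence of edges, and $\phi(v_1) \bigcup \cdots \bigcup \phi(v_d) = [2d]$.

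For case (1), the path $P_d$ on vertices $v_0, v_1, \ldots, v_d$ listed consecutively along the path, I would take $n = d+2$, $s = 2$, and set $\phi(v_i) = \{i+1, i+2\}$. Then $|\phi(v_i) \bigcap \phi(v_j)|$ equals $2-|i-j|$ when $|i-j|\leq 2$ and $0$ otherwise, so this intersection has size $s-1=1$ if and only if $|i-j|=1$, matching the edge set of $P_d$; the union is $[d+2]$.

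For case (2), the cycle $C_d$ on vertices $v_1, \ldots, v_d$ listed in cyclic order (with $d \geq 3$), I would take $n = d$, $s = 2$, and set $\phi(v_i) = \{i, i+1\}$ with indices read modulo $d$, so that $\phi(v_d) = \{d, 1\}$. Two cyclically adjacent labels share exactly one element. For $d \geq 4$, any non-adjacent pair of labels consists of consecutive-residue doubletons separated by a gap on both sides of the cycle, so their intersection is empty; the case $d = 3$ is immediate since every pair of distinct doubletons in $\binom{[3]}{2}$ has intersection of size $1$, matching $C_3 = K_3$. The union is $[d]$ by construction.

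I do not anticipate any significant obstacle: the only conceptual observation is that Lemma~\ref{lem140103b}(2) forces $s = 1$ to give only complete graphs, so paths, cycles, and empty graphs on more than one vertex must use $s \geq 2$, and the natural ``consecutive pair'' construction at $s = 2$ handles all remaining cases. Each verification follows directly from Definition~\ref{labels}.
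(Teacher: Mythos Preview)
Your proposal is correct and takes essentially the same approach as the paper: the paper also handles (3) and (5) by reference (to Example~\ref{ex140103a} via Remark~\ref{rmk140104b}, and to Proposition~\ref{sharpbound}, respectively), gives the same disjoint-pair labeling $\phi(v_i)=\{2i-1,2i\}$ for (4), the same consecutive-pair labeling $\phi(v_i)=\{i,i+1\}$ for (1), and defers (2) to Example~\ref{cycle}, which is exactly your cyclic consecutive-pair construction. Your indexing of $P_d$ with $d+1$ vertices is in fact more faithful to the paper's stated convention than the paper's own proof, which writes the path as $v_1-\cdots-v_d$.
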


\begin{proof} 
Items (2) and (3) follow from Examples \ref{cycle} and  \ref{ex140103a}, respectively, as in Remark~\ref{rmk140104b}.  
Item (5) is from Proposition~\ref{prop140103a}.
An admissible labeling for the path $v_1-v_2-\cdots-v_d$ is 
$\phi(v_i)=\{i,i+1\}$.
And the disjoint union of vertices $v_1,\ldots,v_d$ has admissible labeling $\phi(v_i)=\{2i-1,2i\}$.
\end{proof}

\begin{remark}  \label{rmk140107a}
In light of Propositions~\ref{prop140103a} and~\ref{prop140103ax}, it is natural to ask whether there are other 
standard classes of graphs that have admissible labelings. Some natural candidates can be ruled out
by considering other graphs on five vertices as follows.  All connected graphs with exactly five vertices are shown below.  Proposition~\ref{rmk140107b} can be used to address the disconnected graphs. 

Complete $m$-partite graphs. 
Graphs (9) and (12) from the list below\footnote{Note that the graphs in this list that have admissible labelings are displayed
with one such labeling.  The others, marked NL for ``no label", do not have admissible labelings, as the interested reader is invited to verify.  Graph (6), for example, is addressed in Proposition 5.15.} 
are complete bipartite (namely $K_{1, 4}$ and $K_{2,3}$, respectively) but Graph (9) has an admissible
labeling, while Graph (12) does not.
Graphs (7) and (16) are complete tri-partite,  but Graph (7) has an admissible
labeling, while Graph (16) does not.
Also note that Graph (17) is complete 4-partite, and does not have an admissible labeling.  

Chordal graphs.  Graph (6) is not chordal, while Graph (16) is chordal; neither of these graphs have
admissible labelings. 
Graph (4)  is not chordal, while Graph (3) is chordal; both of these graphs have
admissible labelings. 
\end{remark}

\vskip-1in


\begin{align*}
\begin{pgfpicture}{0cm}{0cm}{0cm}{0cm}
\pgfnodecircle{Node1}[fill]{\pgfxy(-5,1.5)}{0.1cm}
\pgfnodecircle{Node2}[fill]{\pgfxy(-3.57342, 0.463526)}{0.1cm}
\pgfnodecircle{Node3}[fill]{\pgfxy(-4.118322, -1.21352)}{0.1cm}
\pgfnodecircle{Node4}[fill]{\pgfxy(-5.881678, -1.21352)}{0.1cm}
\pgfnodecircle{Node5}[fill]{\pgfxy(-6.42658, 0.463526)}{0.1cm}
\pgfnodeconnline{Node1}{Node5}
\pgfnodeconnline{Node2}{Node3}
\pgfnodeconnline{Node3}{Node4}
\pgfnodeconnline{Node4}{Node5}
\pgfnodecircle{Node6}[fill]{\pgfxy(0,1.5)}{0.1cm}
\pgfnodecircle{Node7}[fill]{\pgfxy(1.42658, 0.463526)}{0.1cm}
\pgfnodecircle{Node8}[fill]{\pgfxy(.881678, -1.21352)}{0.1cm}
\pgfnodecircle{Node9}[fill]{\pgfxy(-.881678, -1.21352)}{0.1cm}
\pgfnodecircle{Node10}[fill]{\pgfxy(-1.42658, 0.463526)}{0.1cm}
\pgfnodeconnline{Node6}{Node7}
\pgfnodeconnline{Node6}{Node10}
\pgfnodeconnline{Node7}{Node8}
\pgfnodeconnline{Node8}{Node9}
\pgfnodeconnline{Node9}{Node10}
\pgfnodecircle{Node11}[fill]{\pgfxy(5,1.5)}{0.1cm}
\pgfnodecircle{Node12}[fill]{\pgfxy(6.42658, 0.463526)}{0.1cm}
\pgfnodecircle{Node13}[fill]{\pgfxy(5.881678, -1.21352)}{0.1cm}
\pgfnodecircle{Node14}[fill]{\pgfxy(4.118322, -1.21352)}{0.1cm}
\pgfnodecircle{Node15}[fill]{\pgfxy(3.57342, 0.463526)}{0.1cm}
\pgfnodeconnline{Node11}{Node12}
\pgfnodeconnline{Node11}{Node15}
\pgfnodeconnline{Node12}{Node13}
\pgfnodeconnline{Node13}{Node14}
\pgfnodeconnline{Node14}{Node15}
\pgfnodeconnline{Node11}{Node13}
\pgfnodeconnline{Node11}{Node14}
\pgfnodeconnline{Node11}{Node15}
\pgfnodeconnline{Node12}{Node14}
\pgfnodeconnline{Node12}{Node15}
\pgfnodeconnline{Node13}{Node15}
\pgfputat{\pgfxy(-5.2, 1.8)}{\pgfbox[left,center]{12}}
\pgfputat{\pgfxy(-3.45, .7)}{\pgfbox[left,center]{56}}
\pgfputat{\pgfxy(-4.1, -1.5)}{\pgfbox[left,center]{45}}
\pgfputat{\pgfxy(-6.3,-1.5)}{\pgfbox[left,center]{34}}
\pgfputat{\pgfxy(-6.9,.7)}{\pgfbox[left,center]{23}}
\pgfputat{\pgfxy(-.2, 1.8)}{\pgfbox[left,center]{12}}
\pgfputat{\pgfxy(1.55, .7)}{\pgfbox[left,center]{15}}
\pgfputat{\pgfxy(.9, -1.5)}{\pgfbox[left,center]{45}}
\pgfputat{\pgfxy(-1.3,-1.5)}{\pgfbox[left,center]{34}}
\pgfputat{\pgfxy(-1.9,.7)}{\pgfbox[left,center]{23}}
\pgfputat{\pgfxy(4.9, 1.8)}{\pgfbox[left,center]{1}}
\pgfputat{\pgfxy(6.55, .7)}{\pgfbox[left,center]{5}}
\pgfputat{\pgfxy(5.9, -1.5)}{\pgfbox[left,center]{4}}
\pgfputat{\pgfxy(3.85,-1.5)}{\pgfbox[left,center]{3}}
\pgfputat{\pgfxy(3.25,.7)}{\pgfbox[left,center]{2}}
\pgfputat{\pgfxy(-5.2,-2.5)}{\pgfbox[left,center]{(1)}}
\pgfputat{\pgfxy(-0.2,-2.5)}{\pgfbox[left,center]{(2)}}
\pgfputat{\pgfxy(4.8,-2.5)}{\pgfbox[left,center]{(3)}}
\end{pgfpicture}
\end{align*}

\vskip-.3in


\begin{align*}
\begin{pgfpicture}{0cm}{0cm}{0cm}{0cm}
\pgfnodecircle{Node1}[fill]{\pgfxy(-5,1.5)}{0.1cm}
\pgfnodecircle{Node2}[fill]{\pgfxy(-3.57342, 0.463526)}{0.1cm}
\pgfnodecircle{Node3}[fill]{\pgfxy(-4.118322, -1.21352)}{0.1cm}
\pgfnodecircle{Node4}[fill]{\pgfxy(-5.881678, -1.21352)}{0.1cm}
\pgfnodecircle{Node5}[fill]{\pgfxy(-6.42658, 0.463526)}{0.1cm}
\pgfnodeconnline{Node1}{Node5}
\pgfnodeconnline{Node2}{Node3}
\pgfnodeconnline{Node3}{Node4}
\pgfnodeconnline{Node4}{Node5}
\pgfnodeconnline{Node1}{Node2}
\pgfnodeconnline{Node2}{Node5}
\pgfnodecircle{Node6}[fill]{\pgfxy(0,1.5)}{0.1cm}
\pgfnodecircle{Node7}[fill]{\pgfxy(1.42658, 0.463526)}{0.1cm}
\pgfnodecircle{Node8}[fill]{\pgfxy(.881678, -1.21352)}{0.1cm}
\pgfnodecircle{Node9}[fill]{\pgfxy(-.881678, -1.21352)}{0.1cm}
\pgfnodecircle{Node10}[fill]{\pgfxy(-1.42658, 0.463526)}{0.1cm}
\pgfnodeconnline{Node6}{Node7}
\pgfnodeconnline{Node6}{Node10}
\pgfnodeconnline{Node7}{Node8}
\pgfnodeconnline{Node8}{Node9}
\pgfnodeconnline{Node9}{Node10}
\pgfnodeconnline{Node7}{Node10}
\pgfnodeconnline{Node7}{Node9}
\pgfnodecircle{Node11}[fill]{\pgfxy(5,1.5)}{0.1cm}
\pgfnodecircle{Node12}[fill]{\pgfxy(6.42658, 0.463526)}{0.1cm}
\pgfnodecircle{Node13}[fill]{\pgfxy(5.881678, -1.21352)}{0.1cm}
\pgfnodecircle{Node14}[fill]{\pgfxy(4.118322, -1.21352)}{0.1cm}
\pgfnodecircle{Node15}[fill]{\pgfxy(3.57342, 0.463526)}{0.1cm}
\pgfnodeconnline{Node11}{Node12}
\pgfnodeconnline{Node11}{Node15}
\pgfnodeconnline{Node12}{Node13}
\pgfnodeconnline{Node13}{Node14}
\pgfnodeconnline{Node14}{Node15}
\pgfnodeconnline{Node11}{Node13}
\pgfnodeconnline{Node12}{Node15}
\pgfputat{\pgfxy(-5.2, 1.8)}{\pgfbox[left,center]{15}}
\pgfputat{\pgfxy(-3.45, .7)}{\pgfbox[left,center]{12}}
\pgfputat{\pgfxy(-4.1, -1.5)}{\pgfbox[left,center]{23}}
\pgfputat{\pgfxy(-6.3,-1.5)}{\pgfbox[left,center]{34}}
\pgfputat{\pgfxy(-6.9,.7)}{\pgfbox[left,center]{14}}
\pgfputat{\pgfxy(-.2, 1.8)}{\pgfbox[left,center]{25}}
\pgfputat{\pgfxy(1.55, .7)}{\pgfbox[left,center]{12}}
\pgfputat{\pgfxy(.9, -1.5)}{\pgfbox[left,center]{13}}
\pgfputat{\pgfxy(-1.3,-1.5)}{\pgfbox[left,center]{14}}
\pgfputat{\pgfxy(-1.9,.7)}{\pgfbox[left,center]{24}}
\pgfputat{\pgfxy(-5.2,-2.5)}{\pgfbox[left,center]{(4)}}
\pgfputat{\pgfxy(-0.2,-2.5)}{\pgfbox[left,center]{(5)}}
\pgfputat{\pgfxy(4.6,-2.5)}{\pgfbox[left,center]{(6: NL)}}
\end{pgfpicture}
\end{align*}

\vskip-.3in


\begin{align*}
\begin{pgfpicture}{0cm}{0cm}{0cm}{0cm}
\pgfnodecircle{Node1}[fill]{\pgfxy(-5,1.5)}{0.1cm}
\pgfnodecircle{Node2}[fill]{\pgfxy(-3.57342, 0.463526)}{0.1cm}
\pgfnodecircle{Node3}[fill]{\pgfxy(-4.118322, -1.21352)}{0.1cm}
\pgfnodecircle{Node4}[fill]{\pgfxy(-5.881678, -1.21352)}{0.1cm}
\pgfnodecircle{Node5}[fill]{\pgfxy(-6.42658, 0.463526)}{0.1cm}
\pgfnodeconnline{Node1}{Node5}
\pgfnodeconnline{Node2}{Node3}
\pgfnodeconnline{Node3}{Node4}
\pgfnodeconnline{Node4}{Node5}
\pgfnodeconnline{Node1}{Node2}
\pgfnodeconnline{Node2}{Node5}
\pgfnodeconnline{Node1}{Node3}
\pgfnodeconnline{Node1}{Node4}
\pgfnodecircle{Node6}[fill]{\pgfxy(0,1.5)}{0.1cm}
\pgfnodecircle{Node7}[fill]{\pgfxy(1.42658, 0.463526)}{0.1cm}
\pgfnodecircle{Node8}[fill]{\pgfxy(.881678, -1.21352)}{0.1cm}
\pgfnodecircle{Node9}[fill]{\pgfxy(-.881678, -1.21352)}{0.1cm}
\pgfnodecircle{Node10}[fill]{\pgfxy(-1.42658, 0.463526)}{0.1cm}
\pgfnodeconnline{Node6}{Node7}
\pgfnodeconnline{Node6}{Node10}
\pgfnodeconnline{Node8}{Node9}
\pgfnodeconnline{Node9}{Node10}
\pgfnodeconnline{Node8}{Node10}
\pgfnodecircle{Node11}[fill]{\pgfxy(5,1.5)}{0.1cm}
\pgfnodecircle{Node12}[fill]{\pgfxy(6.42658, 0.463526)}{0.1cm}
\pgfnodecircle{Node13}[fill]{\pgfxy(5.881678, -1.21352)}{0.1cm}
\pgfnodecircle{Node14}[fill]{\pgfxy(4.118322, -1.21352)}{0.1cm}
\pgfnodecircle{Node15}[fill]{\pgfxy(3.57342, 0.463526)}{0.1cm}
\pgfnodeconnline{Node11}{Node12}
\pgfnodeconnline{Node11}{Node15}
\pgfnodeconnline{Node11}{Node13}
\pgfnodeconnline{Node11}{Node14}
\pgfputat{\pgfxy(-5.2, 1.8)}{\pgfbox[left,center]{12}}
\pgfputat{\pgfxy(-3.45, .7)}{\pgfbox[left,center]{13}}
\pgfputat{\pgfxy(-4.1, -1.5)}{\pgfbox[left,center]{23}}
\pgfputat{\pgfxy(-6.3,-1.5)}{\pgfbox[left,center]{24}}
\pgfputat{\pgfxy(-6.9,.7)}{\pgfbox[left,center]{14}}
\pgfputat{\pgfxy(-.2, 1.8)}{\pgfbox[left,center]{45}}
\pgfputat{\pgfxy(1.55, .7)}{\pgfbox[left,center]{56}}
\pgfputat{\pgfxy(.9, -1.5)}{\pgfbox[left,center]{12}}
\pgfputat{\pgfxy(-1.3,-1.5)}{\pgfbox[left,center]{13}}
\pgfputat{\pgfxy(-1.9,.7)}{\pgfbox[left,center]{14}}
\pgfputat{\pgfxy(4.65, 1.8)}{\pgfbox[left,center]{1234}}
\pgfputat{\pgfxy(6.55, .7)}{\pgfbox[left,center]{2348}}
\pgfputat{\pgfxy(5.9, -1.5)}{\pgfbox[left,center]{1247}}
\pgfputat{\pgfxy(3.7,-1.5)}{\pgfbox[left,center]{1346}}
\pgfputat{\pgfxy(2.8,.7)}{\pgfbox[left,center]{1235}}
\pgfputat{\pgfxy(-5.2,-2.5)}{\pgfbox[left,center]{(7)}}
\pgfputat{\pgfxy(-0.2,-2.5)}{\pgfbox[left,center]{(8)}}
\pgfputat{\pgfxy(4.8,-2.5)}{\pgfbox[left,center]{(9)}}
\end{pgfpicture}
\end{align*}

\vskip-.3in


\begin{align*}
\begin{pgfpicture}{0cm}{0cm}{0cm}{-.5cm}
\pgfnodecircle{Node1}[fill]{\pgfxy(-5,1.5)}{0.1cm}
\pgfnodecircle{Node2}[fill]{\pgfxy(-3.57342, 0.463526)}{0.1cm}
\pgfnodecircle{Node3}[fill]{\pgfxy(-4.118322, -1.21352)}{0.1cm}
\pgfnodecircle{Node4}[fill]{\pgfxy(-5.881678, -1.21352)}{0.1cm}
\pgfnodecircle{Node5}[fill]{\pgfxy(-6.42658, 0.463526)}{0.1cm}
\pgfnodeconnline{Node1}{Node5}
\pgfnodeconnline{Node2}{Node3}
\pgfnodeconnline{Node3}{Node4}
\pgfnodeconnline{Node4}{Node5}
\pgfnodeconnline{Node2}{Node5}
\pgfnodecircle{Node6}[fill]{\pgfxy(0,1.5)}{0.1cm}
\pgfnodecircle{Node7}[fill]{\pgfxy(1.42658, 0.463526)}{0.1cm}
\pgfnodecircle{Node8}[fill]{\pgfxy(.881678, -1.21352)}{0.1cm}
\pgfnodecircle{Node9}[fill]{\pgfxy(-.881678, -1.21352)}{0.1cm}
\pgfnodecircle{Node10}[fill]{\pgfxy(-1.42658, 0.463526)}{0.1cm}
\pgfnodeconnline{Node7}{Node8}
\pgfnodeconnline{Node8}{Node9}
\pgfnodeconnline{Node9}{Node10}
\pgfnodeconnline{Node6}{Node9}
\pgfnodecircle{Node11}[fill]{\pgfxy(5,1.5)}{0.1cm}
\pgfnodecircle{Node12}[fill]{\pgfxy(6.42658, 0.463526)}{0.1cm}
\pgfnodecircle{Node13}[fill]{\pgfxy(5.881678, -1.21352)}{0.1cm}
\pgfnodecircle{Node14}[fill]{\pgfxy(4.118322, -1.21352)}{0.1cm}
\pgfnodecircle{Node15}[fill]{\pgfxy(3.57342, 0.463526)}{0.1cm}
\pgfnodeconnline{Node11}{Node12}
\pgfnodeconnline{Node11}{Node15}
\pgfnodeconnline{Node12}{Node13}
\pgfnodeconnline{Node13}{Node14}
\pgfnodeconnline{Node14}{Node15}
\pgfnodeconnline{Node13}{Node15}
\pgfnodeconnline{Node12}{Node14}
\pgfputat{\pgfxy(-5.2, 1.8)}{\pgfbox[left,center]{156}}
\pgfputat{\pgfxy(-3.45, .7)}{\pgfbox[left,center]{123}}
\pgfputat{\pgfxy(-4.1, -1.5)}{\pgfbox[left,center]{234}}
\pgfputat{\pgfxy(-6.3,-1.5)}{\pgfbox[left,center]{345}}
\pgfputat{\pgfxy(-6.9,.7)}{\pgfbox[left,center]{135}}
\pgfputat{\pgfxy(-.2, 1.8)}{\pgfbox[left,center]{137}}
\pgfputat{\pgfxy(1.55, .7)}{\pgfbox[left,center]{345}}
\pgfputat{\pgfxy(.9, -1.5)}{\pgfbox[left,center]{234}}
\pgfputat{\pgfxy(-1.3,-1.5)}{\pgfbox[left,center]{123}}
\pgfputat{\pgfxy(-1.9,.7)}{\pgfbox[left,center]{126}}
\pgfputat{\pgfxy(-5.2,-2.5)}{\pgfbox[left,center]{(10)}}
\pgfputat{\pgfxy(-0.2,-2.5)}{\pgfbox[left,center]{(11)}}
\pgfputat{\pgfxy(4.6,-2.5)}{\pgfbox[left,center]{(12: NL)}}
\end{pgfpicture}
\end{align*}

\pagebreak


\begin{align*}
\begin{pgfpicture}{0cm}{0cm}{0cm}{2cm}
\pgfnodecircle{Node1}[fill]{\pgfxy(-5,1.5)}{0.1cm}
\pgfnodecircle{Node2}[fill]{\pgfxy(-3.57342, 0.463526)}{0.1cm}
\pgfnodecircle{Node3}[fill]{\pgfxy(-4.118322, -1.21352)}{0.1cm}
\pgfnodecircle{Node4}[fill]{\pgfxy(-5.881678, -1.21352)}{0.1cm}
\pgfnodecircle{Node5}[fill]{\pgfxy(-6.42658, 0.463526)}{0.1cm}
\pgfnodeconnline{Node1}{Node5}
\pgfnodeconnline{Node2}{Node3}
\pgfnodeconnline{Node3}{Node4}
\pgfnodeconnline{Node4}{Node5}
\pgfnodeconnline{Node2}{Node5}
\pgfnodeconnline{Node2}{Node4}
\pgfnodecircle{Node6}[fill]{\pgfxy(0,1.5)}{0.1cm}
\pgfnodecircle{Node7}[fill]{\pgfxy(1.42658, 0.463526)}{0.1cm}
\pgfnodecircle{Node8}[fill]{\pgfxy(.881678, -1.21352)}{0.1cm}
\pgfnodecircle{Node9}[fill]{\pgfxy(-.881678, -1.21352)}{0.1cm}
\pgfnodecircle{Node10}[fill]{\pgfxy(-1.42658, 0.463526)}{0.1cm}
\pgfnodeconnline{Node6}{Node10}
\pgfnodeconnline{Node7}{Node8}
\pgfnodeconnline{Node7}{Node10}
\pgfnodeconnline{Node7}{Node9}
\pgfnodeconnline{Node8}{Node9}
\pgfnodeconnline{Node9}{Node10}
\pgfnodeconnline{Node8}{Node10}
\pgfnodecircle{Node11}[fill]{\pgfxy(5,1.5)}{0.1cm}
\pgfnodecircle{Node12}[fill]{\pgfxy(6.42658, 0.463526)}{0.1cm}
\pgfnodecircle{Node13}[fill]{\pgfxy(5.881678, -1.21352)}{0.1cm}
\pgfnodecircle{Node14}[fill]{\pgfxy(4.118322, -1.21352)}{0.1cm}
\pgfnodecircle{Node15}[fill]{\pgfxy(3.57342, 0.463526)}{0.1cm}
\pgfnodeconnline{Node11}{Node12}
\pgfnodeconnline{Node11}{Node15}
\pgfnodeconnline{Node12}{Node13}
\pgfnodeconnline{Node12}{Node14}
\pgfnodeconnline{Node12}{Node15}
\pgfnodeconnline{Node13}{Node14}
\pgfnodeconnline{Node13}{Node15}
\pgfnodeconnline{Node14}{Node15}
\pgfputat{\pgfxy(-5.25, 1.8)}{\pgfbox[left,center]{356}}
\pgfputat{\pgfxy(-3.45, .7)}{\pgfbox[left,center]{123}}
\pgfputat{\pgfxy(-4.1, -1.5)}{\pgfbox[left,center]{124}}
\pgfputat{\pgfxy(-6.3,-1.5)}{\pgfbox[left,center]{125}}
\pgfputat{\pgfxy(-7.05,.7)}{\pgfbox[left,center]{235}}
\pgfputat{\pgfxy(-.2, 1.8)}{\pgfbox[left,center]{56}}
\pgfputat{\pgfxy(1.55, .7)}{\pgfbox[left,center]{12}}
\pgfputat{\pgfxy(.9, -1.5)}{\pgfbox[left,center]{13}}
\pgfputat{\pgfxy(-1.3,-1.5)}{\pgfbox[left,center]{14}}
\pgfputat{\pgfxy(-1.9,.7)}{\pgfbox[left,center]{15}}
\pgfputat{\pgfxy(4.8, 1.8)}{\pgfbox[left,center]{25}}
\pgfputat{\pgfxy(6.55, .7)}{\pgfbox[left,center]{12}}
\pgfputat{\pgfxy(5.9, -1.5)}{\pgfbox[left,center]{13}}
\pgfputat{\pgfxy(3.7,-1.5)}{\pgfbox[left,center]{14}}
\pgfputat{\pgfxy(3.1,.7)}{\pgfbox[left,center]{15}}
\pgfputat{\pgfxy(-5.2,-2.5)}{\pgfbox[left,center]{(13)}}
\pgfputat{\pgfxy(-0.2,-2.5)}{\pgfbox[left,center]{(14)}}
\pgfputat{\pgfxy(4.8,-2.5)}{\pgfbox[left,center]{(15)}}
\end{pgfpicture}
\end{align*}

\vskip-.3in


\begin{align*}
\begin{pgfpicture}{0cm}{0cm}{0cm}{5cm}
\pgfnodecircle{Node1}[fill]{\pgfxy(-5,1.5)}{0.1cm}
\pgfnodecircle{Node2}[fill]{\pgfxy(-3.57342, 0.463526)}{0.1cm}
\pgfnodecircle{Node3}[fill]{\pgfxy(-4.118322, -1.21352)}{0.1cm}
\pgfnodecircle{Node4}[fill]{\pgfxy(-5.881678, -1.21352)}{0.1cm}
\pgfnodecircle{Node5}[fill]{\pgfxy(-6.42658, 0.463526)}{0.1cm}
\pgfnodeconnline{Node1}{Node5}
\pgfnodeconnline{Node1}{Node2}
\pgfnodeconnline{Node2}{Node3}
\pgfnodeconnline{Node3}{Node5}
\pgfnodeconnline{Node4}{Node5}
\pgfnodeconnline{Node2}{Node5}
\pgfnodeconnline{Node2}{Node4}
\pgfnodecircle{Node6}[fill]{\pgfxy(0,1.5)}{0.1cm}
\pgfnodecircle{Node7}[fill]{\pgfxy(1.42658, 0.463526)}{0.1cm}
\pgfnodecircle{Node8}[fill]{\pgfxy(.881678, -1.21352)}{0.1cm}
\pgfnodecircle{Node9}[fill]{\pgfxy(-.881678, -1.21352)}{0.1cm}
\pgfnodecircle{Node10}[fill]{\pgfxy(-1.42658, 0.463526)}{0.1cm}
\pgfnodeconnline{Node6}{Node7}
\pgfnodeconnline{Node6}{Node8}
\pgfnodeconnline{Node6}{Node9}
\pgfnodeconnline{Node6}{Node10}
\pgfnodeconnline{Node7}{Node8}
\pgfnodeconnline{Node7}{Node9}
\pgfnodeconnline{Node8}{Node9}
\pgfnodeconnline{Node9}{Node10}
\pgfnodeconnline{Node8}{Node10}
\pgfnodecircle{Node11}[fill]{\pgfxy(5,1.5)}{0.1cm}
\pgfnodecircle{Node12}[fill]{\pgfxy(6.42658, 0.463526)}{0.1cm}
\pgfnodecircle{Node13}[fill]{\pgfxy(5.881678, -1.21352)}{0.1cm}
\pgfnodecircle{Node14}[fill]{\pgfxy(4.118322, -1.21352)}{0.1cm}
\pgfnodecircle{Node15}[fill]{\pgfxy(3.57342, 0.463526)}{0.1cm}
\pgfnodeconnline{Node11}{Node15}
\pgfnodeconnline{Node12}{Node13}
\pgfnodeconnline{Node13}{Node14}
\pgfnodeconnline{Node13}{Node15}
\pgfnodeconnline{Node14}{Node15}
\pgfputat{\pgfxy(4.75, 1.8)}{\pgfbox[left,center]{125}}
\pgfputat{\pgfxy(6.55, .7)}{\pgfbox[left,center]{246}}
\pgfputat{\pgfxy(5.9, -1.5)}{\pgfbox[left,center]{234}}
\pgfputat{\pgfxy(3.7,-1.5)}{\pgfbox[left,center]{134}}
\pgfputat{\pgfxy(2.95,.7)}{\pgfbox[left,center]{123}}
\pgfputat{\pgfxy(-5.6,-2.5)}{\pgfbox[left,center]{(16: NL)}}
\pgfputat{\pgfxy(-.6,-2.5)}{\pgfbox[left,center]{(17: NL)}}
\pgfputat{\pgfxy(4.8,-2.5)}{\pgfbox[left,center]{(18)}}
\end{pgfpicture}
\end{align*}

\vskip-.3in


\begin{align*}
\begin{pgfpicture}{0cm}{0cm}{0cm}{5cm}
\pgfnodecircle{Node1}[fill]{\pgfxy(-5,1.5)}{0.1cm}
\pgfnodecircle{Node2}[fill]{\pgfxy(-3.57342, 0.463526)}{0.1cm}
\pgfnodecircle{Node3}[fill]{\pgfxy(-4.118322, -1.21352)}{0.1cm}
\pgfnodecircle{Node4}[fill]{\pgfxy(-5.881678, -1.21352)}{0.1cm}
\pgfnodecircle{Node5}[fill]{\pgfxy(-6.42658, 0.463526)}{0.1cm}
\pgfnodeconnline{Node1}{Node2}
\pgfnodeconnline{Node2}{Node3}
\pgfnodeconnline{Node3}{Node4}
\pgfnodeconnline{Node4}{Node5}
\pgfnodeconnline{Node2}{Node5}
\pgfnodeconnline{Node2}{Node4}
\pgfnodecircle{Node6}[fill]{\pgfxy(0,1.5)}{0.1cm}
\pgfnodecircle{Node7}[fill]{\pgfxy(1.42658, 0.463526)}{0.1cm}
\pgfnodecircle{Node8}[fill]{\pgfxy(.881678, -1.21352)}{0.1cm}
\pgfnodecircle{Node9}[fill]{\pgfxy(-.881678, -1.21352)}{0.1cm}
\pgfnodecircle{Node10}[fill]{\pgfxy(-1.42658, 0.463526)}{0.1cm}
\pgfnodeconnline{Node6}{Node7}
\pgfnodeconnline{Node7}{Node8}
\pgfnodeconnline{Node7}{Node9}
\pgfnodeconnline{Node7}{Node10}
\pgfnodeconnline{Node8}{Node9}
\pgfnodecircle{Node11}[fill]{\pgfxy(5,1.5)}{0.1cm}
\pgfnodecircle{Node12}[fill]{\pgfxy(6.42658, 0.463526)}{0.1cm}
\pgfnodecircle{Node13}[fill]{\pgfxy(5.881678, -1.21352)}{0.1cm}
\pgfnodecircle{Node14}[fill]{\pgfxy(4.118322, -1.21352)}{0.1cm}
\pgfnodecircle{Node15}[fill]{\pgfxy(3.57342, 0.463526)}{0.1cm}
\pgfnodeconnline{Node11}{Node12}
\pgfnodeconnline{Node11}{Node15}
\pgfnodeconnline{Node12}{Node13}
\pgfnodeconnline{Node12}{Node14}
\pgfnodeconnline{Node12}{Node15}
\pgfnodeconnline{Node13}{Node14}
\pgfputat{\pgfxy(-5.25, 1.8)}{\pgfbox[left,center]{136}}
\pgfputat{\pgfxy(-3.45, .7)}{\pgfbox[left,center]{123}}
\pgfputat{\pgfxy(-4.1, -1.5)}{\pgfbox[left,center]{124}}
\pgfputat{\pgfxy(-6.3,-1.5)}{\pgfbox[left,center]{125}}
\pgfputat{\pgfxy(-7.05,.7)}{\pgfbox[left,center]{235}}
\pgfputat{\pgfxy(-.25, 1.8)}{\pgfbox[left,center]{125}}
\pgfputat{\pgfxy(1.55, .7)}{\pgfbox[left,center]{123}}
\pgfputat{\pgfxy(.9, -1.5)}{\pgfbox[left,center]{234}}
\pgfputat{\pgfxy(-1.3,-1.5)}{\pgfbox[left,center]{134}}
\pgfputat{\pgfxy(-2.1,.7)}{\pgfbox[left,center]{126}}
\pgfputat{\pgfxy(4.6, 1.8)}{\pgfbox[left,center]{1235}}
\pgfputat{\pgfxy(6.55, .7)}{\pgfbox[left,center]{1234}}
\pgfputat{\pgfxy(5.9, -1.5)}{\pgfbox[left,center]{2346}}
\pgfputat{\pgfxy(3.5,-1.5)}{\pgfbox[left,center]{1246}}
\pgfputat{\pgfxy(2.8,.7)}{\pgfbox[left,center]{1345}}
\pgfputat{\pgfxy(-5.2,-2.5)}{\pgfbox[left,center]{(18)}}
\pgfputat{\pgfxy(-0.2,-2.5)}{\pgfbox[left,center]{(20)}}
\pgfputat{\pgfxy(4.8,-2.5)}{\pgfbox[left,center]{(21)}}
\end{pgfpicture}
\end{align*}

\vskip1.5in

\begin{Question}
Can regular graphs always be labeled?
\end{Question}

\begin{Problem}
Characterize the graphs that have admissible labelings.
\end{Problem}

Our final result shows that the question of admissible labelings for graphs in general boils down to the connected case.

\begin{proposition}\label{rmk140107b}
Let $G$ be a graph with connected components $G_1,\ldots,G_t$. Then $G$ has an admissible labeling
if and only if each $G_i$ has an admissible labeling.
\end{proposition}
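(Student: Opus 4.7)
My plan is as follows. The forward direction is immediate: each connected component $G_i$ is an induced subgraph of $G$, so Lemma~\ref{lem140103b}(1) produces an admissible labeling of $G_i$ from one for $G$. All the work is in the reverse direction, where we must synthesize a single admissible labeling of $G$ from labelings of its components.

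For the reverse direction, I would begin with admissible labelings $\phi_i\colon V_i\into\binom{[n_i]}{s_i}$ for each $G_i$. The two features that may differ across components---the common cardinality $s_i$ and the ambient alphabet $[n_i]$---must both be harmonized. First, I would normalize the set-sizes: observe that if $\phi_i$ is admissible and $x$ is an element not occurring in any label, then $\phi_i'(v):=\phi_i(v)\cup\{x\}$ is again an admissible labeling (now with parameter $s_i+1$), since adding the same element to every label increases every pairwise intersection and every pairwise union by exactly one. Iterating, each $G_i$ admits an admissible labeling with parameter any prescribed $s\geq s_i$. Choose $s:=\max(s_1,\ldots,s_t,2)$ and bump each component up to have labels of size $s$.

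Next, I would make the alphabets pairwise disjoint by translating: replacing each $\phi_i$'s alphabet by $\{m_i+1,\ldots,m_i+n_i'\}$, where $m_i=n_1'+\cdots+n_{i-1}'$ and $n_i'$ is the size of the alphabet for the bumped $\phi_i$. Setting $N=n_1'+\cdots+n_t'$, I can combine these into a single map $\phi\colon V\into\binom{[N]}{s}$ by $\phi(v)=\phi_i(v)$ when $v\in V_i$. Injectivity and the covering condition $\bigcup_{v\in V}\phi(v)=[N]$ are immediate. For the adjacency condition: if $v,w$ lie in the same $V_i$, then the condition follows from the admissibility of $\phi_i$; if $v\in V_i$ and $w\in V_j$ with $i\neq j$, then $\phi(v)$ and $\phi(w)$ live in disjoint alphabets, so $|\phi(v)\cap\phi(w)|=0$, and $v,w$ are not adjacent in $G$ (they lie in different components).

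The key delicate point---and the only real obstacle---is that the cross-component verification requires $0\neq s-1$, i.e. $s\geq 2$. This is why the normalization step must be allowed to bump past $\max_i s_i$ whenever that maximum is $1$; the availability of such bumping (the ``add a common element'' trick above) is exactly what lets us finesse the case where every component happens to be complete and labeled with singletons. Once $s\geq 2$ is in force, the verification becomes routine.
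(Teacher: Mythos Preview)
Your proof is correct and follows essentially the same approach as the paper: the forward direction via the induced-subgraph lemma, and the converse by normalizing all components to a common label-size $s\geq 2$ (the paper invokes the proof of Corollary~\ref{cor140103a} for the bumping trick that you spell out directly) and then placing the components' alphabets in disjoint translated blocks. Your explicit inclusion of $2$ in the maximum and your explanation of why $s\geq 2$ is required for the cross-component check make the same point the paper records with the clause ``we may assume that $s_i=s\geq 2$.''
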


\begin{proof}
If $G$ has an admissible labeling, then so does each $G_i$, being an induced subgraph by Lemma~\ref{lem140103b}(1).

Conversely, assume that for $i=1,\ldots,t$ the component $G_i$ has an
admissible labeling $\phi_i\colon V_i\into \binom{[n_i]}{s_i}$.
Set $s:=\max\{s_1,\ldots,s_t\}$.
The proof of Corollary~\ref{cor140103a} shows that we may assume that $s_i=s\geq 2$ for each $i$.
Set $n=n_1+\cdots+n_t$ and define $\phi\colon V\into \binom{[n]}{s}$ as follows.
Each vertex $v$ is in a unique $V_i$, say with
$\phi_i(v)=\{a_1,\ldots,a_s\}$. 
Set $m_i:=\sum_{j=1}^{i-1}n_j$.
Then we set
$\phi(v)=\{a_1+m_i,\ldots,a_s+m_i\}$.
Notice that we have $m_i<a_p+m_i\leq m_i+n_i=m_{i+1}$ for each $i$.
It follows that, for $v\in V_i$ and $w\in V_j$ with $i\neq j$, we have $\phi(v)\cap\phi(w)=\emptyset$. 
In particular, $\phi$ satisfies condition (2) from Definition~\ref{labels}
for the non-adjacent vertices $v$ and $w$. It is straightforward to show that
$\phi$ satisfies the remaining conditions of 
Definition~\ref{labels} as well.
\end{proof}

\section*{Acknowledgments}
We are grateful to Graham Leuschke, Warren Shreve, and Jessica Striker for useful conversations about this material.


\providecommand{\bysame}{\leavevmode\hbox to3em{\hrulefill}\thinspace}
\providecommand{\MR}{\relax\ifhmode\unskip\space\fi MR }
\providecommand{\MRhref}[2]{%
  \href{http://www.ams.org/mathscinet-getitem?mr=#1}{#2}
}
\providecommand{\href}[2]{#2}

\end{document}